\DeclareMathOperator*{\argmin}{argmin}
\DeclareMathOperator{\new}{new}
\def\Kzero{\mathcal{K}_0}
\def\Kone{\mathcal{K}_1}
\DeclareMathOperator{\T}{T}
\def\bbR{\mathbb{R}}
\def\xnew{x_{\text{new} }}
\def\new{\text{new}}
\def\fhat{\hat{f}}
\def\Ghat{\hat{G}}
\def\ahat{\hat{\alpha}}
\def\xhat{\hat{x}}
\def\shat{\hat{s}}
\def\ghat{\hat{g}}
\def\Konehat{\hat{\mathsf{K}}_1}
\newtheorem{proposition}{Proposition}[section]
\newtheorem{theorem}{Theorem}[section]
\newtheorem{lemma}{Lemma}[section]
\newtheorem{assumption}{Assumption}[section]
\theoremstyle{definition}
\newtheorem{definition}{Definition}[section]
\newtheorem{remark}{Remark}[section]
\numberwithin{equation}{section}
\numberwithin{figure}{section}
\numberwithin{table}{section}
\title{Kahan's Automatic Step-Size Control for Unconstrained Optimization}
\author{Yifeng Meng\thanks{School of Mathematical Sciences,
Fudan University, Shanghai, 200433, China. Email: {\tt  yfmeng23@m.fudan.edu.cn}}
\and Chungen Shen\thanks{College of Science, University of Shanghai for Science and Technology, Shanghai, 200093, China. Email: {\tt shenchungen@gmail.com}}
\and Linuo Xue\thanks{Institute of Science and Technology for Brain-Inspired Intelligence, Fudan University, Shanghai, 200433,  China. Email: {\tt  lnxue23@m.fudan.edu.cn}}
 \and Lei-Hong Zhang\thanks{Corresponding author. School of Mathematical Sciences, Soochow University, Suzhou, 215006, Jiangsu, China.
        This work was
 supported in part by the National Natural Science Foundation of China (NSFC-12471356,  NSFC-12371380), Jiangsu Shuangchuang Project (JSSCTD202209), Academic Degree and Postgraduate Education Reform Project of Jiangsu Province, and China Association of Higher Education under grant 23SX0403. 
        Email: {\tt longzlh@suda.edu.cn}.}
}
\begin{document}
\date{ }
\maketitle

\begin{abstract}
The Barzilai and Borwein (BB) {gradient method} is one of the most widely-used line-search gradient methods. It computes the step-size for the current iterate by using the information carried in the previous iteration. Recently, William Kahan  [{\it Kahan,  Automatic Step-Size Control for Minimization Iterations, Technical report, University of California, Berkeley CA, USA, 2019}] proposed new Gradient Descent (KGD) step-size strategies which iterate the step-size itself by effectively utilizing the information in the previous iteration. In the quadratic model, such a new step-size is shown to be mathematically equivalent to the long BB step, but no rigorous mathematical proof of its efficiency and effectiveness for the general unconstrained minimization is available.  In this paper, by this equivalence with the long BB step, we first derive a short version of KGD step-size and show that, for the strongly {convex} quadratic model with a Hessian matrix $H$, both the long and short KGD  step-size (and hence BB step-sizes)  {gradient methods} converge at least R-linearly with a rate $1-\frac{1}{{\rm cond}(H)}$.  For the general unconstrained minimization, we further propose an adaptive framework to effectively use the KGD step-sizes; global convergence and local R-linear convergence rate are proved. Numerical experiments are conducted on the CUTEst collection {as well as the practical logistic regression problems,  and  we  compare the performance  of the proposed methods with various BB step-size approaches and other recently proposed  adaptive gradient methods to demonstrate the efficiency and robustness.}
\end{abstract}

\medskip
{\small
{\bf Key words. Line-search, Barzilai-Borwein step-size, R-linear convergence, Global convergence}    
\medskip

{\bf AMS subject classifications. 65L20, 65B99, 90C25}
}

\section{Introduction}
Given a continuously differentiable function $f: \bbR^n \to \bbR$, we consider the unconstrained minimization
    \begin{equation}\label{eq:prob1}
        \min_{x \in \bbR^n } f(x).
    \end{equation}
The classical line-search gradient method is to solve the problem  \eqref{eq:prob1} by iterating the approximation $x_k$ according to $x_{k+1} = x_k - \alpha_k g_k$ until convergence, where $g_k = \nabla f(x_k)$ is the gradient at $x_k$ and $\alpha_k \geq 0$ is a properly chosen step-size.
    \par
    A natural choice of the step-size is the so-called  Cauchy step (see e.g., \cite{nowr:2006}), which is computed by solving exactly the step-size $\alpha_k^*$ defined by
    \begin{equation}\label{eq:cauchy}
        \alpha_k^* = \argmin_{\alpha \geq 0} f(x_k - \alpha g_k).
    \end{equation}
    It is known that for a strongly convex quadratic function (i.e., $H\in \bbR^{n\times n }$ is symmetric and positive definite)
    \begin{equation}\label{eq:quadf}
        f(x) = \frac{1}{2} x^{\T} H x + b^{\T} x,
    \end{equation}
    the sequence $\{x_k\}$ converges Q-linearly to $x^*=-H^{-1}b$ with the linear rate \cite{nowr:2006}
       $ \varrho = \frac{\kappa - 1 }{\kappa +1 }$ 
    where $\kappa=\frac{\lambda_{n}(H)}{\lambda_{1}(H)}\geq 1$ is the condition number of the Hessian matrix $H$ and $\lambda_n(H)\ge \lambda_{n-1}(H)\ge \dots\lambda_1(H)>0$ are the  ordered eigenvalues of $H$.  In practice, however, computing the Cauchy step \eqref{eq:cauchy} accurately can be inefficient, and more importantly, the resulting sequence $\{ x_k \}$ will still suffer from  slow convergence and zigzag behavior (see e.g., \cite{nowr:2006}). This is the case even for the quadratic model \eqref{eq:quadf}. As one of the most efficient and widely-used practical approaches for the step-size, in 1988, Barzilai and Borwein  \cite{babo:1988} introduced the long and short choices, commonly referred to as BB steps, $\alpha_k^{\text{BB1}} $ and $\alpha_k^{\text{BB2}} $, respectively,  through  minimizing the residuals of certain secant equations in $\ell_2$ norm: 
    \begin{equation}{\label{BBl} }
        \alpha_k^{\text{BB1}} = \argmin_{\alpha \in \bbR} \|\alpha^{-1} s_{k-1} - y_{k-1}\|_2 = \frac{s_{k-1}^{\T} s_{k-1} }{s_{k-1}^{\T} y_{k-1} }
    \end{equation}
    and
    \begin{equation}{\label{BBs}}
        \alpha_k^{\text{BB2}} = \argmin_{\alpha \in \bbR} \|s_{k-1} -\alpha y_{k-1}\|_2 = \frac{s_{k-1}^{\T} y_{k-1} }{y_{k-1}^{\T} y_{k-1} },
    \end{equation}
    where $s_{k-1} = x_k - x_{k-1}$ and $y_{k-1} = g_k - g_{k-1}$. Besides  favorable numerical performances in practical applications, there are many works (see, e.g., \cite{budh:2019,dafl:2005,dali:2002,flet:2005,lisu:2021,rayd:1993,rayd:1997}) on the convergence in theory as well as its extensions since 1988. For example, in case of the  strongly convex quadratic model \eqref{eq:quadf}, {(i) \cite{rayd:1993} proves the global   convergence of both the long and short BB methods, (ii) \cite{dali:2002} contributes  the R-linear convergence of the long BB step \eqref{BBl} method, and (iii) a recent development given in \cite{lisu:2021} further shows that  the sequence $\{x_k\}$  converges to the solution $-H^{-1}b$ at least linearly with a rate of $1- \frac{1}{\kappa}$.} However, for a general  strongly convex function $f(x)$, {the  gradient method with the long BB step} \eqref{BBl} may not be convergent and counter-example of divergence with cycles between four points is given in \cite{budh:2019}; to ensure  global convergence, adaptive strategies of effectively using  $\alpha_k^{\text{BB1}} $ and/or $\alpha_k^{\text{BB2}} $  in a globalization framework have been discussed in \cite{budh:2019}, and favorable numerical performance is reported on a test of CUTEst collection problems \cite{gort:2015}.
        

    Another recent development for accelerating the line-search gradient methods is made by the numerical analyst William Kahan \cite{kahn:2019b,kahn:2019a}.  One of his ideas is to distinguish two scenarios for the current iterate $x_k$ and then propose different automatic step-size control for $\alpha$. In particular, he defined two regimes associated with the current iterate $x_k$ as follows:
    \begin{equation}\label{eq:Regime01}
	\hspace{5mm}
	\framebox{\parbox{13cm}{
			{\bf {\bf Regime 0} \cite{kahn:2019b}:} $x_k$ is rather farther from a sought minimum $x_*$ than from other stationary points. Hessian $H(x_k)=\nabla^2 f(x_k)$ varies enough to thwart attempts to infer good values for hyper-parameters other than step-sizes.\\\\
			{\bf {\bf Regime 1} \cite{kahn:2019b}:} $x_k$ is much closer to a minimum than to all other stationary points, and the largest and least eigenvalues of $H(x_k)$ vary slowly. 
			}
		}
	\end{equation}
    Whenever the scenario where the current $x_k$ is situated can be detected, automatic step-size strategies for Regime 0 and Regime 1, which we call Kahan's gradient descent (KGD) step-sizes to be presented in Section \ref{sec:KGD}, can be employed without using explicitly the condition number   and the norm of the convergent Hessian matrix. Heuristic rules for determining the two regimes in \eqref{eq:Regime01} are presented. However, apart from some preliminary  numerical experiments claimed in \cite{kahn:2019b,kahn:2019a}, there is no systematic framework in efficiently globalizing the KGD step-sizes in two scenarios, and moreover, no rigorous mathematical proof of the efficiency and effectiveness of KGD step-size strategies is available. 
    \par
    A recent  result \cite{gjmx:2023} relevant to Kahan's work in \cite{kahn:2019b,kahn:2019a}  has connected the KGD step-size strategy for Regime 1 with the long BB step-size \eqref{BBl} when the underlying objective function $f(x)$ is  strongly convex quadratic. This connection, together with some recent developments \cite{budh:2019,dafl:2005,dahs:2006,lisu:2021}  on the BB steps, makes it possible to rigorously provide a mathematical proof for the convergence of the {KGD step-size method} for Regime 1 for general convex problems; moreover, inspired also by this connection, we propose a counterpart of KGD step-size which reduces to the short BB step-size \eqref{BBs} in the strongly convex quadratic case; the newly proposed KGD step-size for  Regime 1 is then named as the short KGD step-size,  in contrast the long KGD step-size originally presented in \cite{kahn:2019b,kahn:2019a}. A more clear description of our contributions in this paper is as follows.
    \begin{itemize}
    \item[(1)] By a relation between the long and short BB step-sizes in different variable spaces, and the recent convergence analysis \cite{lisu:2021} for the long  BB step-size, we prove that the short {BB step-size method} also converges to the solution  at least linearly with a rate of $1- \frac{1}{\kappa}$ for the strongly convex quadratic model \eqref{eq:quadf}; see Theorem \ref{thm3.2}.
    
    \item[(2)] We propose a short KGD step-size strategy for Regime 1, and show that both long and short KGD step-size {{methods}} converge to the solution  at least linearly with a rate of $1- \frac{1}{\kappa}$ for the strongly convex quadratic model \eqref{eq:quadf}; see Theorems  \ref{thm3.1} and \ref{thm3.2}. 
    
    \item[(3)] For the strongly convex problem, we show that both long and short KGD step-size {methods} converge at least R-linearly; see Theorems  \ref{thm:global} and \ref{localcon}.

    \item[(4)] For the general unconstrained minimization \eqref{eq:prob1}, we propose an effective adaptive KGD step-size framework which dynamically treats the two regimes \eqref{eq:Regime01} and converges globally; see Algorithm \ref{alg:global} and Theorem \ref{thm:global}.
    
    \item[(5)] We conduct numerical tests on the CUTEst collection \cite{gort:2015} {as well as the practical logistic regression problems \cite{mami:2020},  and  compare the performance  of the proposed methods with various BB step-size approaches and other recently proposed adaptive gradient methods \cite{mami:2020} to demonstrate the efficiency and robustness}; see Section \ref{sec:numer}.

    \end{itemize}

    \par
    The rest of the paper is organized as follows. In Section \ref{sec:KGD}, we provide an explicit description of Kahan's automatic step-size control strategy \cite{kahn:2019b,kahn:2019a}.  Section \ref{sec:quad} focuses on the quadratic model and establishes the relationship between KGD step-sizes for  {Regime 1} and BB steps; particularly, for both BB  and KGD, we will present the corresponding long and short step-sizes for {Regime 1} and show that each enjoys the R-linear convergence in the strongly convex quadratic model.  Section \ref{sec:globalalgorithm} is devoted to the general unconstrained minimization in which we will propose an effective adaptive KGD step-size framework that dynamically treats the two regimes \eqref{eq:Regime01}; global convergence and local R-linear convergence analysis of the adaptive KGD method will be performed. Numerical tests on the CUTEst collection \cite{gort:2015}  {as well as the practical logistic regression problems \cite{mami:2020}  will be carried out in Section \ref{sec:numer}, and  the proposed methods are compared to various BB step-size approaches and   other recently proposed adaptive gradient methods \cite{mami:2020} to demonstrate the efficiency and robustness}. Final remarks are drawn in Section \ref{sec:conclu}.
    
\section{Kahan's automatic step-size control strategies}{\label{sec:KGD} }
    In this section we introduce Kahan's automatic step-size control strategies given in \cite{kahn:2019b,kahn:2019a}. 
    \par
    We first discuss the KGD step-size strategy for Regime 0.  Let $x$ be the current approximation, and suppose we already have a trial step-size $\alpha>0$ and have computed  an associated trial update      
    \begin{equation}\nonumber
         {\tilde{x}_{\rm new}(x,\alpha)}= x - \alpha G(x),\quad \mbox{with~~} G(x)=\nabla f(x).
    \end{equation}
When $x$ is not sufficiently close to a minimizer (i.e., $x$ is regarded as  situated in Regime 0 and our practical and precise rule for determining $x$ to be in  Regime 0 will be given in Algorithm \ref{alg:global}), the trial ${\tilde{x}_{\rm new}=\tilde{x}_{\rm new}(x,\alpha)}$  may not deliver improvement over $x$ in the sense that $f(\tilde{x}_{\rm new})>f(x)$. In this case, Kahan's idea is to shrink the trial step-size $\alpha$ by using the information of $\tilde{x}_{\rm new}$. In particular, {assuming $f$ is three times continuously differentiable at $x$ and} using Taylor's expansion, we have  
    \begin{equation}{\label{eq2.1}}
        f(\tilde{x}_{\rm new}) = f(x) - \alpha \cdot \frac{1}{2}\left[G(x)+G(\tilde{x}_{\rm new})\right]^{\T} G(x) +O(\alpha ^3 \|G(x)\|_2^{{3}}).
    \end{equation}
    Notice 
    \begin{align*}\nonumber
         &G(x)^{\T}G(\tilde x_{\rm new})= \frac14\left[\|G(\tilde{x}_{\rm new})+G(x)\|_2^2 -\|G(\tilde{x}_{\rm new})-G(x)\|_2^2\right], \\
         &\|G(\tilde{x}_{\rm new})-G(x)\|_2^2=O(\alpha^2 \|G(x)\|_2^2);
    \end{align*}
 combine them with \eqref{eq2.1}  to have
    \begin{align*}\nonumber
	f(\tilde{x}_{\rm new})&= f(x) - \alpha \cdot \frac{1}{2}\left[\|G(x)\|_2^2+G(\tilde{x}_{\rm new})^{\T} G(x)\right] +O(\alpha ^3 \|G(x)\|_2^{{3}})
	\\
	&=f(x)-\alpha \cdot \frac{1}{8}\left[\|G(x)+G(\tilde{x}_{\rm new})\|_2^2+4\|G(x)\|_2^2 \right] + O\left(\alpha ^3 \|G(x)\|_2^2\right).
    \end{align*}
    Hence we have a gauge function of   $f(x)$ at $x$ along the direction $-G(x)$ with a parameter $\Delta \tau\ge 0$:
    \begin{align}\nonumber
        f(x-\Delta \tau G(x)) =& f(x) -  \frac{\Delta \tau }{8}\left[\|G(x)+G(\tilde{x}_{\rm new})\|_2^2+4\|G(x)\|_2^2 \right] \\{\label{eq2.2}}
        &+ \Upsilon(x,\Delta \tau ) \Delta \tau^3 \|G(x)\|_2^2.
    \end{align}
    Assuming the function $\Upsilon(x,\Delta \tau)$ varies slowly in $\Delta \tau$ (i.e., assume $\Upsilon(x,\Delta \tau)\approx \Upsilon(x,\alpha)$ for the parameter $\Delta \tau$) and substituting  it with 
    $$\Upsilon(x,\alpha)=\frac{f(\tilde{x}_{\rm new})-f(x)+  \frac{\alpha}{8}\left(\|G(x)+G(\tilde{x}_{\rm new})\|_2^2+4\|G(x)\|_2^2\right)}{\|G(x)\|_2^2\cdot\alpha^3},$$   we have from \eqref{eq2.2} that 
    \begin{equation}{\label{eq2.3}}
            \begin{split}
			f(x-\Delta \tau G(x)) &\approx f(x)-  \frac{\Delta \tau}{8}[\|G(x)+G(\tilde{x}_{\rm new})\|_2^2+4\|G(x)\|_2^2]\\
			&+\left(\frac{\Delta \tau}{\alpha}\right)^3 \left (f(\tilde{x}_{\rm new})-f(x)+  \frac{\alpha}{8}\left(\|G(x)+G(\tilde{x}_{\rm new})\|_2^2+4\|G(x)\|_2^2\right)\right). 
            \end{split}
    \end{equation}
Based on \eqref{eq2.3},   Kahan's idea is to compute the new step-size $\alpha_{\new} $ for $x$ by   minimizing  the right hand side of \eqref{eq2.3}  which then yields 
    \begin{equation}\nonumber
        \alpha_{\new} = \frac{\alpha}{\sqrt{3+\frac{24[f(\tilde{x}_{\rm new})-f(x)]}{\alpha [\|G(x)+G(\tilde{x}_{\rm new})\|_2^2+4\|G(x)\|_2^2]}}}.
    \end{equation}
    To simplify our discussion, we define Kahan's automatic step-size in  {Regime 0} as follows:
    \begin{definition}{\label{KGD0}}
        For a given trial step $\alpha>0$ and a trial ${\tilde{x}_{\rm new} =\tilde{x}_{\rm new}(x,\alpha)}:= x - \alpha G(x)$, we define\footnote{{The first argument $x$ in  $\Kzero(x; x,\alpha)$ indicates that $\Kzero(x; x,\alpha)$  is a step-size for the iterate $x$, and $\Kzero(x; x,\alpha)$ is a function of the second  pair of arguments  ($x,\alpha$).}} 
    \begin{equation}{\label{eq:KGD0} }
            {\Kzero(x; x,\alpha)} := \frac{\alpha}{\sqrt{3+\frac{24[f(\tilde{x}_{\rm new})-f(x)]}{\alpha [\|G(x)+G(\tilde{x}_{\rm new})\|_2^2+4\|G(x)\|_2^2]}}}
        \end{equation}
        as the Kahan's automatic step-size in {Regime 0} for $x$ {with respect to the trial $\tilde{x}_{\rm new}$}.
    \end{definition}
    
More properties of {$\Kzero(x; x,\alpha)$} will be given in Lemmas \ref{lm4.1} and \ref{lm4.2}. In particular, it is worth mentioning here that whenever the trial ${\tilde{x}_{\rm new}(x,\alpha)}$ does not deliver a sufficient improvement in the sense of \eqref{lm1:cond} on the $f(x)$, then  {$\Kzero(x; x,\alpha)$} will shrink until \eqref{lm1:cond} is fulfilled.
  
  When $x$ is in {Regime 1},  local convergence to a minimizer can take place and $\|G(x)\|_2$ is small; thus suppose $\alpha>0$ is the step-size for $x$ and we have the new update (not a trial point) $\xnew=x-\alpha G(x)$. KDG step-size for Regime 1 is a strategy to set the step-size $\alpha_\new$ for $\xnew$. Precisely, in this case, a gauge function based on the quadratic approximation 
    \begin{equation}{\label{eq:estKGD1}}
	f(x-\Delta \tau G(x)) \approx f(x)-\Delta \tau \cdot \|G(x)\|_2^2 +\Delta \tau^2 \frac{1}{2} G(x)^{\T} H(x) G(x) 
    \end{equation}
    of  $f(x)$ at $x$ is sufficiently accurate. To avoid explicitly using the information of the Hessian $H(x)$, Kahan's idea is to reuse the information associated with $\xnew$. 
    Applying similarly the argument as \eqref{eq2.3},  the following  
    \begin{equation}\nonumber
        \alpha_{\new} = \frac{\alpha }{2+ \frac{2(f(\xnew)-f(x) )}{ \alpha \|G(x)\|_2^2} }
    \end{equation}
approximates well the minimizer that minimizes the right hand side of \eqref{eq:estKGD1} as \cite{kahn:2019b}
\begin{equation}\nonumber
\frac{\alpha }{2+ \frac{2(f(\xnew)-f(x) )}{ \alpha \|G(x)\|_2^2} }\approx \frac{\|G(x)\|_2^2}{G(x)^{\T}H(x)G(x)\pm \alpha \cdot O(\|G(x)\|_2^3)}.
\end{equation}
  Thus,  we similarly can define Kahan's automatic step-size in  {Regime 1}: 
    \begin{definition}{\label{KGD1}}
        For a step-size $\alpha>0$ and  $\xnew = x - \alpha G(x)$, we define\footnote{{The first argument $\xnew$ in   $\Kone(\xnew; x,\alpha)$ indicates that $\Kone(\xnew; x,\alpha)$ is  a step-size for the iterate $\xnew$, and $\Kone(\xnew; x,\alpha)$ is dependent on the second pair of arguments  ($x,\alpha$).}}
        \begin{equation}{\label{eq:KGD1} }
            {\Kone(\xnew; x,\alpha)} := \frac{\alpha }{2+ \frac{2(f(\xnew)-f(x) )}{ \alpha \|G(x)\|_2^2} }
        \end{equation}
        as the Kahan's automatic step-size in {Regime 1} for $\xnew$.
    \end{definition}

\section{Convergence of KGD step-sizes for the quadratic model}{\label{sec:quad} }
We first investigate the convergence of KGD step-size strategies for the strongly convex problem \eqref{eq:quadf}, where $H$ is  symmetric positive definite. In this case,  $G(x) = Hx+b$ and $x^* = - H^{-1} b$ is the solution. Moreover, as  the  Hessian matrix is constant,  based on our statements for Regime 0 and Regime 1 in \eqref{eq:Regime01},  the current $x$ can be assumed to be in Regime 1 and the iteration resulted from KDG step-size strategy  reads as 
        \begin{equation}{\label{it:r1}}
            \begin{cases}
                 x_{k+1} = x_{k} - \alpha_{k} G(x_k) \\
                 \alpha_{k+1} = {\Kone(x_{k+1};x_k,\alpha_k)}.
            \end{cases}
        \end{equation}
\par
To see more clearly the formulation of $\alpha_{k+1}$ in \eqref{it:r1},  by \eqref{eq:quadf}, we have 
\begin{equation}{\label{eq3.1} }
    f(x) - f(y) = \frac{1}{2} \left( G(x)+G(y) \right)^{\T} \left( x-y \right).
\end{equation}
Plug \eqref{eq3.1} into \eqref{eq:KGD1} to get 
\begin{equation}{\label{eq:BBl}}
    {\Kone(x_{k+1};x_{k},\alpha_k )} = \frac{s_k^{\T} s_k }{s_k^{\T} y_k }=\alpha_{k+1}^{\text{BB1}},
\end{equation}
where $s_k = x_{k+1} - x_k$ and $y = G(x_{k+1}) - G(x_{k})$. This shows the equivalence \cite{gjmx:2023} between Kahan's step-size in {Regime 1} \eqref{eq:KGD1} and the long Borwein-Barzilai step \cite{babo:1988} given in \eqref{BBl}.

\subsection{Long and short KGD step-sizes in Regime 1}\label{subsec:longshortKGD}
Due to the relation \eqref{eq:BBl}, we call the step-size $\Kone(x_{k},\alpha_k )$ in \eqref{eq:KGD1} as the long KGD step-size for $x_{k+1}$  in Regime 1. 
Inspired by this equivalence, we next provide a short  KGD step-size  in Regime 1 which reduces to  the short  BB step \eqref{BBs} for the quadratic model. Precisely, for a step-size $\alpha>0$ and  $\xnew = x - \alpha G(x)$, we define the short  KGD step-size  in Regime 1 for $\xnew$ to be
\begin{equation}{\label{eq:shortKGD} }
     {\Kone^{\rm s}(\xnew; x,\alpha)}:= \frac{2(\alpha \|G(x)\|_2^2 + (f(\xnew) - f(x)) ) }{ \|G(\xnew)-G(x)\|_2^2 }.
\end{equation}
To see the equivalence in quadratic model \eqref{eq:quadf}, we can plug \eqref{eq3.1} into \eqref{eq:shortKGD}  to get 
\begin{equation}{\label{eq:BBs}}
     {\Kone^{\rm s}(\xnew; x_k,\alpha_k)}= \frac{ s_k^{\T} y_k}{ y_k^{\T} y_k }=\alpha_{k+1}^{\text{BB2}}.
\end{equation}
 
%

\subsection{Convergence in the quadratic model}\label{subsec:convgquad}
According to the relationship \eqref{eq:BBl}, in exact arithmetic, we know that for the quadratic model, the iteration sequence generated by using the long BB step \eqref{BBl} is equivalent to that from using the long KGD step \eqref{eq:BBl}, provided that they start from the initial $x_0$ and the same step-size $\alpha_0$. Analogously, by \eqref{eq:BBs}, this is true for the short versions.  This means that all convergence behaviors of the long and short BB steps can be applied  to the long and short KGD steps. 
{
\begin{theorem}{\label{thm3.1} }
    Let $f(x)$ be a strongly convex quadratic function \eqref{eq:quadf}. Then, 
    \begin{itemize}
    \item[{\rm (i)}] {\rm (\cite{rayd:1993})} Both the long and short BB gradient methods  converge  to the minimizer $x^* = -H^{-1} b$ of $f(x)$. 
    \item[{\rm (ii)}]  {\rm (\cite{lisu:2021})} Moreover, the convergence of the BB long step-size \eqref{BBl} method is at least linear with a rate of $1- \frac{1}{\kappa}$, where $\kappa$ is the condition number of $H$.  
    \end{itemize}
\end{theorem}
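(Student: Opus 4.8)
The plan is to reduce both claims to the behaviour of the gradient sequence $g_k = Hx_k + b$ expressed in the eigenbasis of $H$. Writing $H = Q\Lambda Q^{\T}$ with $\Lambda = \diag(\lambda_1,\dots,\lambda_n)$ and $0 < \lambda_1 \le \cdots \le \lambda_n$, and letting $g_k^{(i)} := (Q^{\T} g_k)_i$ denote the $i$-th eigencomponent, the quadratic structure yields the exact scalar recursions $g_{k+1}^{(i)} = (1 - \alpha_k \lambda_i)\, g_k^{(i)}$ for each $i$. The long BB step, rewritten as $\alpha_k^{\text{BB1}} = \sum_i (g_{k-1}^{(i)})^2 \big/ \sum_i \lambda_i (g_{k-1}^{(i)})^2$, is the reciprocal of a convex combination of the eigenvalues and therefore always lies in $[1/\lambda_n, 1/\lambda_1]$. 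Consequently $\alpha_k \lambda_i \in [1/\kappa, \kappa]$ for every $i$; in particular the smallest-eigenvalue contraction factor satisfies $0 \le 1 - \alpha_k \lambda_1 \le 1 - 1/\kappa$, while the largest-eigenvalue factor $1 - \alpha_k \lambda_n$ may have magnitude as large as $\kappa - 1$. This asymmetry is the source of the non-monotone, zigzag behaviour and is what makes the analysis nontrivial.

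For part (i) I would follow Raydan's argument. The component $g_k^{(1)}$ is contracted at every step, so it tends to zero. The crux is then an induction over the eigencomponents: assuming the components associated with the smaller eigenvalues have been driven to zero, one shows that the step-size is eventually dictated by the surviving larger-eigenvalue components, which forces those to contract as well. The two technical ingredients are (a) a uniform upper bound on $\|g_k\|_2$ --- needed precisely because the fast directions can be temporarily amplified --- and (b) a $\liminf_k \|g_k\|_2 = 0$ statement, which combined with the step-size bounds upgrades to $\lim_k \|g_k\|_2 = 0$, i.e. $x_k \to x^*$. The same scheme applies to the short step once one notes that $\alpha_k^{\text{BB2}} = s_{k-1}^{\T} y_{k-1}/y_{k-1}^{\T} y_{k-1}$ also lies in $[1/\lambda_n, 1/\lambda_1]$ for the quadratic model.

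For part (ii), the quantitative rate $1 - 1/\kappa$, I would follow Li and Sun and sharpen the component analysis into a Lyapunov-type estimate. The guiding observation is that the single direction $g^{(1)}$ is guaranteed to contract by a factor at most $1 - 1/\kappa$ at every iteration, so it suffices to show the full residual is, after finitely many steps, controlled by this slowest mode. Concretely I would track a suitable merit quantity --- the objective gap $f(x_k) - f(x^*) = \tfrac12 g_k^{\T} H^{-1} g_k$, or equivalently a weighted sum of the $(g_k^{(i)})^2$ --- and establish that over each step, or over a bounded block of steps, it is reduced by a factor $1 - 1/\kappa$. The R-linear (rather than Q-linear) nature of the conclusion is exactly what accommodates the intermediate amplification of the fast modes.

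The main obstacle throughout is the loss of monotonicity: unlike the Cauchy step, the BB step can increase $\|g_k\|_2$, so there is no one-line descent inequality. The real work is the bookkeeping that shows these increases are transient and cannot accumulate, controlling the amplification of the large-eigenvalue components while exploiting the guaranteed contraction of the small-eigenvalue ones. Since part (i) is established in \cite{rayd:1993} and the rate in part (ii) in \cite{lisu:2021}, we invoke those results directly; the value added here is the translation, via \eqref{eq:BBl} and \eqref{eq:BBs}, of these BB guarantees to the long and short KGD step-sizes, which generate identical iterate sequences for the quadratic model.
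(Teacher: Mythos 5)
Your proposal is correct and takes essentially the same route as the paper: Theorem \ref{thm3.1} is presented there purely as a citation of known results, with part (i) deferred to \cite{rayd:1993} and part (ii) to \cite{lisu:2021}, which is exactly what your final paragraph does. Your eigenbasis sketch of how those cited proofs work (the recursion $g_{k+1}^{(i)} = (1-\alpha_k\lambda_i)g_k^{(i)}$, the bound $\alpha_k \in [1/\lambda_n, 1/\lambda_1]$ for both BB steps, and the induction/Lyapunov outlines) is accurate but ultimately supplementary, since both you and the paper invoke the literature for the actual arguments.
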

}
We next consider  short step-size strategies for BB and KGD in quadratic models. Let $\{x_k\}$ be the sequence generated by the short KGD (equivalently, the short BB step \eqref{BBs}) step  \eqref{eq:BBs}. By calculations, we have  
\begin{equation}\nonumber
    \alpha_{k+1}={\Kone^{\rm s}(x_{k+1};x_k , \alpha_k)} = \frac{s_k^{\T} H s_k }{s_k^{\T} H^2 s_k }
\end{equation}
with $s_k = x_{k+1} - x_k$. Introducing $w_k = H^{\frac{1}{2}} x_k$, we have
\begin{equation}\nonumber
    w_{k+1} = w_k - \alpha_k (Hw_k + H^{\frac{1}{2}}b ).
\end{equation}
Now construct a new quadratic function $\varphi(w) = \frac{1}{2} w^{\T} H w + (H^{\frac{1}{2}} b )^{\T} w$ and denote its gradient by $\Phi(w)$; then we have
\begin{equation}\nonumber
    w_{k+1} = w_k - \alpha_k \Phi(w).
\end{equation}
It is interesting to note that
\begin{equation}\nonumber
    \alpha_{k+1}  =\frac{z_k^{\T} z_k }{z_k^{\T} H z_k}={ \Kone(w_{k+1};w_k , \alpha_k)}
\end{equation}
with $z_k = w_{k+1}-w_k$. This shows that {\it the sequence $\{x_k\}$ obtained from using the short KGD step \eqref{eq:BBs} is exactly corresponding to the sequence $\{w_k\}$ for the counterpart quadratic model $\varphi(w)$ with the long KGD (equivalently, the long BB) step \eqref{eq:BBl}.} By this relation and Theorem \ref{thm3.1}, we conclude  
\begin{theorem}{\label{thm3.2} }
    Let $f(x)$ be a strongly convex quadratic function \eqref{eq:quadf}. Then, in exact arithmetic, the sequences 
     $\{x_k\}$  generated from the short KGD step $\alpha_{k+1}=\Kone^{\rm s}(x_{k+1};x_k,\alpha_k)$ given  in \eqref{eq:shortKGD} and from the short BB step  $\alpha_{k+1}^{\rm BB2}$ given in \eqref{BBs} are the same; furthermore, the sequence $\{x_k\}$ converges to the minimizer $x^* = -H^{-1} b$ of $f(x)$ at least linearly with a rate of $1- \frac{1}{\kappa}$, where $\kappa$ is the condition number of $H$.
\end{theorem}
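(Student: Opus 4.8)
The plan is to split the argument along the two assertions of the statement and to lean on the change of variables $w = H^{1/2}x$ that is already set up in the paragraph preceding the statement, together with Theorem~\ref{thm3.1}. The first claim (that the two generated sequences coincide) is purely algebraic, while the convergence claim is obtained by transporting the rate from the auxiliary $w$-model back to the original $x$-variables.

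For the first claim I would argue by induction on $k$ in exact arithmetic. Both methods are initialized with the same $x_0$ and the same $\alpha_0$, so $x_1 = x_0 - \alpha_0 G(x_0)$ agrees for the two schemes. Assuming $x_0,\dots,x_{k+1}$ and $\alpha_0,\dots,\alpha_k$ agree, the secant identity \eqref{eq3.1} valid for the quadratic \eqref{eq:quadf} — which is precisely what turns the short KGD formula into the short BB formula in \eqref{eq:BBs} — gives $\Kone^{\rm s}(x_{k+1};x_k,\alpha_k)=\alpha_{k+1}^{\rm BB2}$, so the next step-sizes coincide and hence $x_{k+2}$ coincides. This closes the induction and identifies the two sequences.

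For the convergence claim I would invoke the equivalence already derived: writing $w_k = H^{1/2}x_k$, the short-step iteration for $f$ on $\{x_k\}$ is identical to the long KGD (equivalently long BB) iteration for the auxiliary quadratic $\varphi(w)=\tfrac12 w^{\T}Hw+(H^{1/2}b)^{\T}w$ on $\{w_k\}$. The point I would emphasize is that $\varphi$ is again strongly convex quadratic with Hessian $H$, so its condition number is the same $\kappa$. Theorem~\ref{thm3.1}(ii) then applies verbatim to $\{w_k\}$: it converges to the minimizer $w^*=-H^{-1/2}b = H^{1/2}x^*$ of $\varphi$ at least linearly with rate $1-\frac{1}{\kappa}$.

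The remaining, and main, step is to transfer this rate from $w$-space back to $x$-space. Since $w_k - w^* = H^{1/2}(x_k-x^*)$ with $H^{1/2}$ a fixed symmetric positive definite matrix, I would bound $\|x_k-x^*\|_2 = \|H^{-1/2}(w_k-w^*)\|_2 \le \|H^{-1/2}\|_2\,\|w_k-w^*\|_2$; feeding in the R-linear estimate $\|w_k-w^*\|_2 \le C(1-\frac{1}{\kappa})^k$ immediately yields $\|x_k-x^*\|_2 \le \|H^{-1/2}\|_2\,C(1-\frac{1}{\kappa})^k$, i.e.\ R-linear convergence of $\{x_k\}$ with the same rate. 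The subtlety I would be careful about is that this works precisely because the convergence is R-linear rather than Q-linear: a fixed invertible change of variables rescales only the leading constant and therefore preserves an R-linear rate, whereas it need not preserve a Q-linear one. Phrasing the conclusion as ``at least linearly with rate $1-\frac{1}{\kappa}$'' in the R-linear sense is exactly what makes the transfer clean, and I expect this rate-preservation under $H^{1/2}$ to be the only non-bookkeeping point in the proof.
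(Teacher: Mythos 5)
Your proposal is correct and follows essentially the same route as the paper: the coincidence of the sequences comes from the identity \eqref{eq3.1} turning $\Kone^{\rm s}$ into $\alpha^{\rm BB2}$, and the rate comes from the change of variables $w_k = H^{1/2}x_k$, under which the short-step iteration on $f$ becomes the long KGD/BB iteration on $\varphi(w)=\tfrac12 w^{\T}Hw+(H^{1/2}b)^{\T}w$ (same Hessian, same $\kappa$), to which Theorem~\ref{thm3.1}(ii) applies. Your explicit remark that the R-linear rate survives the fixed invertible map $H^{-1/2}$ (at the cost of only the leading constant) is a point the paper leaves implicit, and it is a worthwhile clarification rather than a deviation.
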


Due to Theorems \ref{thm3.1} and \ref{thm3.2}, to simplify our presentation, we introduce the operator  ${\mathsf K}_1$ to represent either the long (i.e., ${\mathsf K}_1=\Kone$) or the short  (i.e., ${\mathsf K}_1=\Kone^{\rm s}$) KGD step-size updating.  Correspondingly, we have the iteration 
        \begin{equation}{\label{eq:lsKGD}}
            \begin{cases}
                 x_{k+1} = x_{k} - \alpha_{k} G(x_k) \\
                 \alpha_{k+1} = {{\mathsf K}_1(x_{k+1};x_k,\alpha_k)}.
            \end{cases}
        \end{equation}

\section{Adaptive KGD method for unconstrained minimization}{\label{sec:globalalgorithm} }
In this section, we will extend our discussion on the quadratic model to the general unconstrained optimization. To deal with a general $f(x)$, effective and precise rules should be specified in a framework to determine the regime where the current iterate $x_k$ is situated, and be able to adaptively choose the proper KGD size-size formula to  ensure the global convergence. 

{Let $x_0$ be the initial point. We also make the following assumption:
\begin{assumption}\label{assum:A1}
    The function $f:\bbR^n \to \bbR$ is  {three times} continuously differentiable, and 
     $\Omega = \left\{ x\in \bbR^n : f(x) \leq f(x_0) \right\}$ is compact. 
     \end{assumption}
With this assumption, it follows that there is  a positive constant $\Lambda_1$ such that
    \begin{equation}{\label{A1} }
        v^{\T} H(x) v \leq \Lambda_1 \|v\|_2^2
    \end{equation}
    holds for any $v \in \bbR^n$ and $x \in \Omega$, where $H(x)= \nabla^2 f(x)$ is the Hessian matrix at $x$; also, 
there exists a Lipschitz constant $L>0$ such that
    \begin{equation}{\label{A2} }
        \|H(x) - H(y)\|_2 \leq L\|x-y\|_2.
    \end{equation}
    }
%
%
%
%

    The following lemmas concern with ${\Kzero(x;x,\alpha)}$ and will be used later.
\begin{lemma}{\label{lm4.1} }
    Let $x \in \Omega$ with $G(x) \neq 0$ and $\alpha > 0$. Consider the following sequence
    \begin{equation}{\label{lm1:seq} }
        \begin{cases}
        \Delta \tau_0 = \alpha\\
            \Delta \tau_{k+1} = {\Kzero(x; x,\Delta \tau_{k})}.
                    \end{cases}
    \end{equation}
    Then there exists an integer $p \geq 0$ such that
    \begin{equation}{\label{lm1:cond} }
        f(x - \Delta \tau_{p} G(x) ) \le f(x) - \eta \Delta \tau_{p} \|G(x)\|_2^2,
    \end{equation}
    where $0< \eta < \frac{1}{3}$ is a given parameter.
\end{lemma}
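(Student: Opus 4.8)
\emph{Setup and strategy.} Throughout this lemma $x$ is fixed, so $\|G(x)\|_2^2>0$ is a fixed constant; only the trial point $\tilde{x}_{\rm new}=x-\Delta\tau\,G(x)$ and the quantities built from it vary with the trial step $\Delta\tau$. The plan is to recast the update \eqref{eq:KGD0} in a form that exposes how it shrinks, and then to run a backtracking-type argument: I will show (A) that the sufficient-decrease inequality \eqref{lm1:cond} holds automatically for all small enough steps, and (B) that every time \eqref{lm1:cond} fails the Kahan update contracts the step by a fixed factor $\rho<1$; together these force termination after finitely many steps. To this end, write $A(\Delta\tau):=\frac18\big[\|G(x)+G(\tilde{x}_{\rm new})\|_2^2+4\|G(x)\|_2^2\big]$ and $B(\Delta\tau):=\Delta\tau\,A(\Delta\tau)+f(\tilde{x}_{\rm new})-f(x)$; a direct simplification of \eqref{eq:KGD0} gives the identity
\begin{equation}\nonumber
\Kzero(x;x,\Delta\tau)=\sqrt{\frac{\Delta\tau^{3}\,A(\Delta\tau)}{3\,B(\Delta\tau)}},
\end{equation}
which also records that $\Kzero$ is the positive critical point of the cubic gauge in \eqref{eq2.3}. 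Note that $A(\Delta\tau)\ge\frac12\|G(x)\|_2^2$ for every $\Delta\tau$.

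\emph{Step (A): small steps always work.} Since $-G(x)$ is a descent direction, $\frac{d}{d\Delta\tau}f(x-\Delta\tau G(x))\big|_{\Delta\tau=0}=-\|G(x)\|_2^2$, so $\Delta\tau^{-1}[f(x-\Delta\tau G(x))-f(x)]\to-\|G(x)\|_2^2$ as $\Delta\tau\to0^+$. Because $\eta<1$, there is a threshold $\bar\tau>0$ such that \eqref{lm1:cond} holds for every $\Delta\tau\in(0,\bar\tau]$ (a quantitative choice $\bar\tau=2(1-\eta)/\Lambda_1$ follows from the Hessian bound \eqref{A1}, modulo keeping the segment inside $\Omega$).

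\emph{Step (B): failure forces geometric shrinkage.} Suppose \eqref{lm1:cond} fails at $\Delta\tau_k$, i.e. $f(\tilde{x}_{\rm new})-f(x)>-\eta\,\Delta\tau_k\|G(x)\|_2^2$. Then $B(\Delta\tau_k)>\Delta\tau_k\big(A-\eta\|G(x)\|_2^2\big)\ge\Delta\tau_k(\tfrac12-\eta)\|G(x)\|_2^2>0$, using $A\ge\frac12\|G(x)\|_2^2$ and $\eta<\frac12$, so the update stays real and the identity above yields
\begin{equation}\nonumber
\Delta\tau_{k+1}<\Delta\tau_k\sqrt{\frac{A}{3\big(A-\eta\|G(x)\|_2^2\big)}}.
\end{equation}
Writing $a:=A/\|G(x)\|_2^2\ge\frac12$, the factor is $\sqrt{a/\big(3(a-\eta)\big)}$; since $a\mapsto a/(a-\eta)$ is decreasing on $a>\eta$, its supremum over $a\ge\frac12$ is attained at $a=\frac12$, giving $\Delta\tau_{k+1}<\rho\,\Delta\tau_k$ with $\rho:=(3-6\eta)^{-1/2}$.

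\emph{Conclusion.} The contraction factor satisfies $\rho<1$ exactly because $\eta<\frac13$, which is precisely where the hypothesis on $\eta$ enters. If \eqref{lm1:cond} failed at every index, Step (B) would give $\Delta\tau_k<\rho^{k}\alpha\to0$, so eventually $\Delta\tau_k\le\bar\tau$, whereupon Step (A) forces \eqref{lm1:cond} — a contradiction. Hence \eqref{lm1:cond} holds at some finite $p$. The one delicate point is Step (B): I must check that a \emph{failed} Armijo test makes $B$ positive (so the recursion remains well-defined) and, more importantly, that the resulting shrink factor admits a bound $\rho<1$ that is \emph{uniform} in the step index $k$; the monotonicity in $a$ together with the fixed lower bound $A\ge\frac12\|G(x)\|_2^2$ is what delivers this uniformity.
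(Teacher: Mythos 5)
Your proof is correct and follows essentially the same route as the paper's: you show that a failed test \eqref{lm1:cond} forces the Kahan update to contract by the uniform factor $(3(1-2\eta))^{-1/2}<1$ (exactly the paper's bound, with $\eta<\tfrac13$ entering in the same place), and that steps below the threshold $2(1-\eta)/\Lambda_1$ automatically satisfy \eqref{lm1:cond} by the mean-value/Hessian bound \eqref{A1}. Your rewriting of $\Kzero$ via $A$ and $B$ and the explicit check that $B>0$ are just a cleaner presentation of the same computation.
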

\begin{proof}
For the given $\eta$, suppose   $\Delta \tau > 0$ satisfies 
    \begin{equation}\nonumber\label{eq:noimprove}
        f(\xnew) \geq f(x) - \eta \Delta \tau \|G(x)\|_2^2,
    \end{equation}
    where $\xnew = x - \Delta \tau G(x)$. Note  
        \begin{equation}\nonumber
        \begin{split}
           &3+ \frac{24[f(\xnew)-f(x)]}{\Delta \tau [\|G(x)+G(\xnew) \|_2^2 + 4\|G(x)\|_2^2 ]}\\
           \geq & \,  3- \frac{ 24\eta\|G(x)\|_2^2 }{\|G(x)+G(\xnew) \|_2^2 + 4\|G(x)\|_2^2 }\\
            \geq & \, 3(1-2 \eta).
        \end{split}
    \end{equation}
    Hence
    \begin{equation}\nonumber
        {\Kzero(x;x,\Delta \tau)} \leq \frac{1 }{ \sqrt{ 3(1-2\eta)} } \Delta \tau.
    \end{equation}
    This fact indicates that the step-size in \eqref{lm1:seq} shrinks to zero at a rate of $ \frac{1 }{ \sqrt{ 3(1-2\eta)} } < 1$ whenever the condition \eqref{lm1:cond} invalidates. To see   \eqref{lm1:cond} can be met after a finite number of iterations, we note, by the   mean-valued theorem (see \cite[Equ. (A56)]{nowr:2006}), that locally,
 \begin{align*}\nonumber
        f(x - t G(x) ) &= f(x)-t\|G(x)\|_2^2+\frac{t^2}{2} G(x)^{\T}H(x - \hat t\cdot  tG(x)) G(x) ~~({\rm for~ some~}\hat t\in [0,1])\\
        &\le  f(x)-t\|G(x)\|_2^2+\frac{t^2\Lambda_1}{2} \|G(x)\|_2^{2}\quad\quad \quad \quad \quad  ({\rm by ~} \eqref{A1})\\
        &\le  f(x) - \eta  t \|G(x)\|_2^2,
    \end{align*}
for all $0\le t\le \frac{2(1-\eta)}{\Lambda_1}$. This implies that after a finite number of iterations of \eqref{lm1:seq}, the condition \eqref{lm1:cond} will be fulfilled.
\end{proof}
\begin{lemma}{\label{lm4.2}}
    Let $\xi = \sup_{x \in \Omega} \|G(x)\|_2$. Then for any $x \in \Omega$ and $\Delta \tau > 0$, we have
    \begin{equation}\nonumber
        {\Kzero(x;x, \Delta \tau)} \geq \frac{2 }{ \sqrt{12\xi L+3\Lambda_1} }.
    \end{equation}
\end{lemma}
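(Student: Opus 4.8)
The plan is to reduce the claim to an upper bound on the radicand and then exploit a structural cancellation. Writing $\tilde{x}_{\rm new}=x-\Delta\tau\,G(x)$ and
\[
R:=3+\frac{24\,[f(\tilde{x}_{\rm new})-f(x)]}{\Delta\tau\,\big[\,\|G(x)+G(\tilde{x}_{\rm new})\|_2^2+4\|G(x)\|_2^2\,\big]},
\]
we have $\Kzero(x;x,\Delta\tau)=\Delta\tau/\sqrt{R}$, so the assertion $\Kzero(x;x,\Delta\tau)\ge 2/\sqrt{12\xi L+3\Lambda_1}$ is equivalent to the radicand estimate $R\le\tfrac{\Delta\tau^2}{4}\,(12\xi L+3\Lambda_1)$. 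The crucial observation is that the target is \emph{independent} of $\Delta\tau$; hence I must first show that $R$ itself scales like $\Delta\tau^2$, i.e. that the $O(\Delta\tau)$ contribution to $R$ cancels exactly and only genuinely higher-order terms survive.

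To expose this, I would expand $f(\tilde{x}_{\rm new})-f(x)$ by the trapezoidal form of Taylor's theorem (legitimate since $f\in C^3$ by Assumption~\ref{assum:A1}),
\[
f(\tilde{x}_{\rm new})-f(x)=-\frac{\Delta\tau}{2}\,G(x)^{\T}\big(G(x)+G(\tilde{x}_{\rm new})\big)+r,
\]
with $r$ the third-order remainder. Substituting into the numerator $24\,[f(\tilde{x}_{\rm new})-f(x)]$ and combining the leading term with $3$ times the denominator, the parallelogram identity gives the exact cancellation
\[
3\big[\|G(x)+G(\tilde{x}_{\rm new})\|_2^2+4\|G(x)\|_2^2\big]-12\,G(x)^{\T}\big(G(x)+G(\tilde{x}_{\rm new})\big)=3\,\|G(\tilde{x}_{\rm new})-G(x)\|_2^2,
\]
so that
\[
R=\frac{3\,\|G(\tilde{x}_{\rm new})-G(x)\|_2^2+24\,r/\Delta\tau}{\|G(x)+G(\tilde{x}_{\rm new})\|_2^2+4\|G(x)\|_2^2}.
\]
In the quadratic case $r=0$, and $R$ equals the first quotient, which is manifestly $O(\Delta\tau^2)$ through $\|G(\tilde{x}_{\rm new})-G(x)\|_2^2$.

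It then remains to estimate the two pieces. For the denominator I would use the trivial bound $\|G(x)+G(\tilde{x}_{\rm new})\|_2^2+4\|G(x)\|_2^2\ge 4\|G(x)\|_2^2$. For the dominant numerator term, the mean-value representation $G(\tilde{x}_{\rm new})-G(x)=-\Delta\tau\big(\int_0^1 H(x-t\Delta\tau G(x))\,dt\big)G(x)$ together with the Hessian bound \eqref{A1} yields $\|G(\tilde{x}_{\rm new})-G(x)\|_2\le \Lambda_1\Delta\tau\|G(x)\|_2$; this is the term of the radicand controlled by $\Lambda_1$. For the remainder I would invoke the Lipschitz estimate \eqref{A2} on $H$ to get $|r|\le c\,L\,\Delta\tau^3\|G(x)\|_2^3$ for an explicit constant $c$, and then replace $\|G(x)\|_2$ by $\xi=\sup_{x\in\Omega}\|G(x)\|_2$; this produces the $\xi L$-term. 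Dividing by $4\|G(x)\|_2^2$, collecting the common factor $\Delta\tau^2$, and taking reciprocal square roots gives the stated lower bound (a careful accounting of the constants is what fixes the coefficients $12$ and $3$). Along the way one checks $R>0$, so that $\Kzero(x;x,\Delta\tau)$ is well defined, the nonnegative part $3\,\|G(\tilde{x}_{\rm new})-G(x)\|_2^2/(\cdots)$ dominating the remainder.

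The main obstacle is precisely the cancellation step: one must verify that after substituting the trapezoidal expansion the $\Delta\tau$-linear contribution is annihilated \emph{exactly}, leaving only $\|G(\tilde{x}_{\rm new})-G(x)\|_2^2$ and the genuinely third-order $r$; otherwise $R$ would retain an $O(1)$ term and no $\Delta\tau$-independent lower bound on $\Kzero$ could hold. The secondary difficulty is the uniform, dimension-free control of the remainder $r$ through $L$ and $\xi$ alone, which is what keeps the final estimate free of the step-size $\Delta\tau$.
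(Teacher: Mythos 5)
Your proposal follows essentially the same route as the paper's own proof: the paper likewise puts $f(\xnew)-f(x)$ in trapezoidal mean-value form (via two mean-value identities plus the Lipschitz bound \eqref{A2}), uses the same exact algebraic cancellation to reduce the radicand's numerator to $3\Delta\tau\,\|G(\xnew)-G(x)\|_2^2$ plus a third-order remainder of size $\tfrac{L}{2}\Delta\tau^3\|G(x)\|_2^3$, bounds the denominator below by $4\|G(x)\|_2^2$, and replaces $\|G(x)\|_2$ by $\xi$ to get the $12\xi L$ term. The only substantive issue is precisely the step you postpone to ``careful accounting of the constants'': squaring your estimate $\|G(\xnew)-G(x)\|_2\le\Lambda_1\Delta\tau\|G(x)\|_2$ produces $3\Lambda_1^2$, not $3\Lambda_1$, in the radicand, i.e.\ the bound $2/\sqrt{12\xi L+3\Lambda_1^2}$ rather than the stated one. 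You should know, however, that the paper's proof has exactly the same conflation at the same spot --- it asserts $\tfrac18\Delta\tau\|G(\xnew)-G(x)\|_2^2\le\tfrac{\Lambda_1}{8}\Delta\tau^3\|G(x)\|_2^2$, which requires $\|H\|_2^2\le\Lambda_1$ along the segment rather than what \eqref{A1} supplies --- so your plan reproduces the paper's argument in structure and in this loose constant alike.
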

\begin{proof}
    With $\xnew = x - \Delta \tau G(x)$, by the mean-valued theorem, we have 
     \begin{align*}
     &f(\xnew)-f(x)- G(x)^{\T}(\xnew -x) =\frac12(\xnew -x)^{\T}H(x+t_1(\xnew -x))(\xnew -x),\\
       & (G(\xnew)-G(x))^{\T}(\xnew -x) =  (\xnew -x)^{\T}H(x+t_2(\xnew -x))(\xnew -x),
     \end{align*}
     for some $t_1,t_2\in [0,1]$. Thus 
     \begin{equation}\nonumber
        \left|f(\xnew)-f(x)-\frac{1}{2}(G(x)+G(\xnew))^{\T}(\xnew -x) \right| \leq \frac{L}{2} \Delta \tau^3 \|G(x)\|_2^3.
    \end{equation}
    Combining it with \eqref{A1} and using 
    \begin{align*}
    &\frac{\Delta \tau}{8} [\|G(x)+G(\xnew)\|_2^2+4\|G(x)\|_2^2]-\frac{1}{2}(G(x)+G(\xnew))^{\T}(\xnew -x)\\
  =  &\frac{\Delta \tau}{8}  \|G(\xnew)-G(x)\|_2^2,
    \end{align*}
    we have
             \begin{equation}\nonumber
                 \begin{split}
                    &\left| f(\xnew)-f(x) +\Delta \tau \cdot \frac{1}{8} [\|G(x)+G(\xnew)\|_2^2+4\|G(x)\|_2^2] \right|\\
                    \leq &\,  \left|f(\xnew)-f(x)-\frac{1}{2}(G(x)+G(\xnew))^{\T}(\xnew -x) \right| + \frac{1}{8} \Delta \tau \|G(\xnew)-G(x)\|_2^2\\
                    \leq &\, \frac{L}{2} \Delta \tau^3 \|G(x)\|_2^3 + \frac{\Lambda_1}{8 } \Delta \tau^3 \|G(x)\|_2^2\\
                    \leq &\, \left(\frac{\xi L}{2} + \frac{\Lambda_1}{8} \right) \Delta \tau^3 \|G(x)\|_2^2.
                 \end{split}
             \end{equation}
    Hence we have
    \begin{equation}\nonumber
        \begin{split}
        {\Kzero(x;x,\Delta \tau)}  & = \frac{\Delta \tau }{\sqrt{ 3+ \frac{24[f(\xnew)-f(x)]}{\Delta \tau [\|G(x)+G(\xnew) \|_2^2 + 4\|G(x)\|_2^2 ]} } } \\
                    & =  \, \frac{\Delta \tau \sqrt{\Delta \tau [\|G(x)+G(\xnew) \|_2^2 + 4\|G(x)\|_2^2] } }{\sqrt{ 24(f(\xnew)-f(x))+ 3 \Delta \tau [\|G(x)+G(\xnew) \|_2^2 + 4\|G(x)\|_2^2] }}\\
                    & \geq \frac{\sqrt{ 4\|G(x)\|_2^2 } }{\sqrt{ 24[(\frac{\xi L}{2} + \frac{\Lambda_1}{8} ) \|G(x)\|_2^2 ] } } \\
                    & =  \frac{2}{\sqrt{12\xi L+3\Lambda_1} }
        \end{split}
    \end{equation}
    as desired.
\end{proof}

\subsection{Adaptive KGD methods and the global convergence} \label{subsec:KGDadp}
    We first point out that the iteration scheme \eqref{it:r1} alone might not work even in the strictly  convex case. Indeed, this is the case for BB steps: in the review paper by Fletcher \cite{flet:2005}, it is claimed that the BB method diverges  for certain initial points in the test problem referred to as {\em strongly convex 2} by Raydan \cite{rayd:1997}:    \begin{equation}\nonumber
        f(x) = \sum_{i=1}^n i(e^{x_i}-x_i)/10.
    \end{equation}
    For KGD, we will present a new example on which the iteration \eqref{it:r1} does not converge.  {Similar to \cite{budh:2019},} we consider the following  function
        \begin{equation}{\label{fun:counterexample} }
			f(x)=\begin{cases}
				(x-b)^2, \, x\leq -a,\\
				c_1 x^4 +c_2 x^2 + c_3 , \, -a<x<a, \\
				(x+b)^2, \, x\geq a,
			\end{cases}
	\end{equation}
    where $0<a<b$ and $c_1,c_2,c_3$ are chosen to make  $f(x)$ smooth enough.  Note that the gradient of $f(x)$ is 
        \begin{equation}\nonumber{\label{fun:countergrad} }
		g(x)= \begin{cases}
			2(x-b) , \, x\leq -a, \\
			4c_1 x^3 +2c_2 x , \, -a<x<a, \\
			2(x+b) , \, x \geq a,
		\end{cases}
	\end{equation}
    which is an odd and monotonically increasing continuous function. Choose $x_0 = -b$ and $x_1 = -a$ to be the initial points, and one can verify $\alpha_1 = \frac{1}{2}$ and $x_2 = b$ for any four step-size strategies. With a proper choice of $0<a<b$, we can fulfill the condition ${\Kone(x_2;b,\frac{1}{2})} = \frac{b-a}{4b}$ by solving an algebraic equation of $\frac{a}{b}$ with a unique root between $0$ and $1$. It then can be seen  that  $x_3 = b - \frac{b-a}{4b} \cdot 4b = a$. Using the symmetry of $f(x)$, we get $x_4 = -b$ and $x_5 = -a$, and consequently,  we have  the cycle  between four points: $-b \to -a \to b \to a \to -b \to -a \to \cdots$ (see Figure \ref{fig:ce}).
    \begin{figure}[h!!!]
    \centering
    \subfloat{\includegraphics[width = 2.5in]{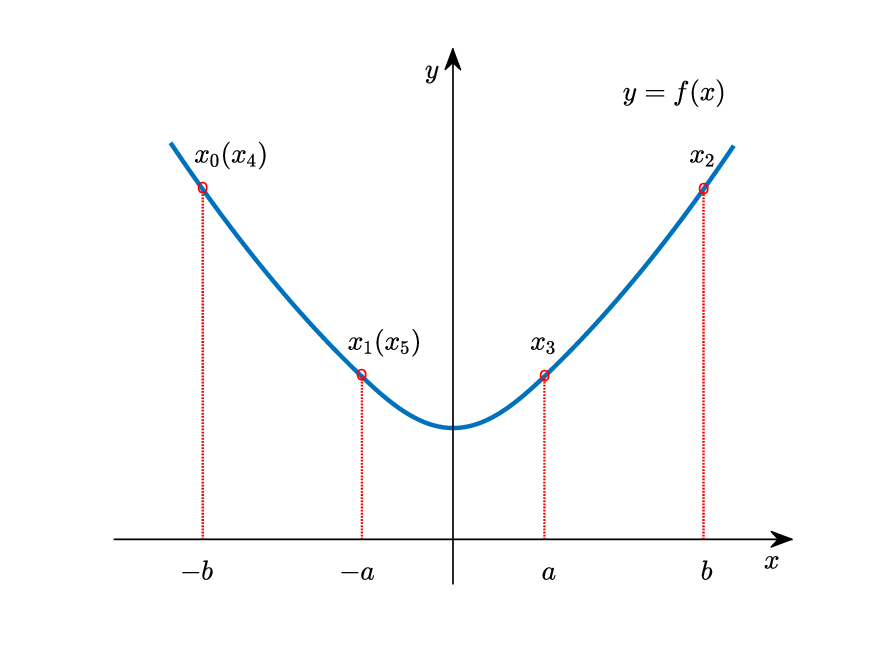} \label{fx}}
    \hfill
    \subfloat{\includegraphics[width = 2.5in]{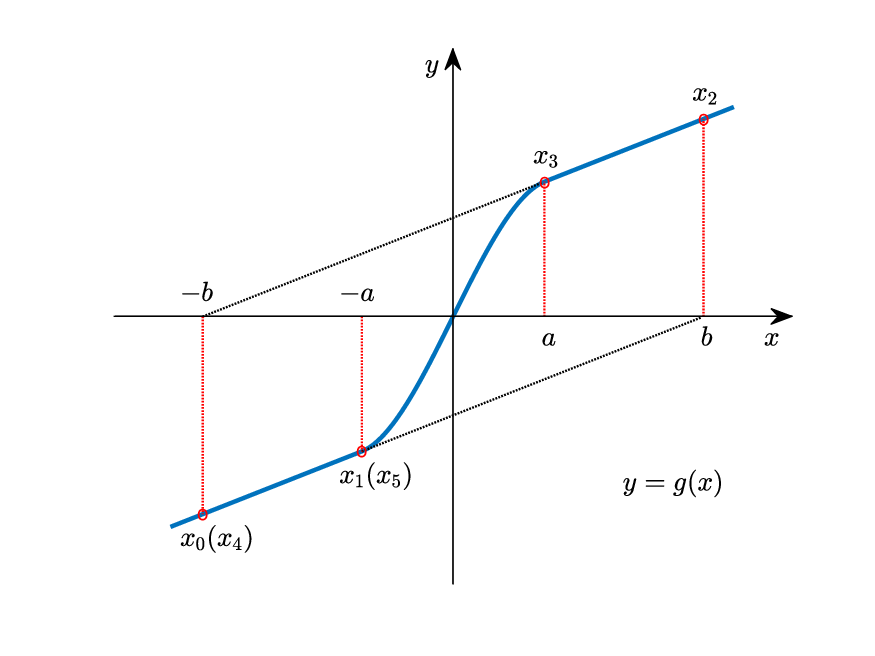} \label{gx}}
    \hfill
    \caption{\em  Illustration of the cycle  between four iterations from the long KGD step-size for the function \eqref{fun:counterexample}.} 
    \label{fig:ce}
    \end{figure}
    
In order to stabilize the KGD step-sizes for the global convergence, at the current iterate $x_k$, we propose a simple rule (a nonmonotone Armijo-like line-search technique \cite{grip:1986}) to determine whether the KGD step-size for Regime 1 should be applied. In particular, for $x_k$, we continuously  use ${\Kzero(x_k;x_k,\alpha)}$ to update $\alpha\leftarrow{\Kzero(x_k;x_k,\alpha)}$ whenever the current $\alpha$ satisfies 
    \begin{equation}\label{eq:ruleKGD0}
        f(x_k-\alpha g_k) {\geq} \max_{0 \leq j \leq \min(k,M) } f(x_{k-j}) - \eta \alpha \|g_k\|_2^2,
    \end{equation}
     where $M$ is a fixed positive integer and $0 < \eta < \frac{1}{3}$. This adaptation works due to Lemma \ref{lm4.1}. {Other adaptive gradient-based methods for convex problems can be found in \cite{mami:2020,lats:2023}.} As long as \eqref{eq:ruleKGD0} invalidates for $(x_k,\alpha_k)$, {we then update the iterate $x_{k+1} = x_k - \alpha_k g_k$ and choose either the long KGD step-size $\alpha_{k+1} = \Kone(x_{k+1};x_k,\alpha_k)$ or the short KGD step-size $\alpha_{k+1} = {\Kone^{\rm s}(x_{k+1};x_k,\alpha_k)}$ as the initial step-size for $x_{k+1}$.}  The overall framework of our proposed adaptive KGD method is summarized in Algorithm \ref{alg:global}.
     \begin{remark}
      We remark that the framework of Algorithm \ref{alg:global} facilitates the use of other step-size strategies in Step 5 and Step 7. For example, the long/short BB step-size can be used in Step  7 in Algorithm \ref{alg:global}. In order to distinguish versions with different step-size strategies, we introduce the notation ${\tt KGDadp}(\Kzero,\Kone)$
      to indicate the one where   $\Kzero$ and $\Kone$ are used in Step 5 and Step 7, respectively. Therefore, other specific implementations 
      $$
      {\tt KGDadp}(\Kzero,\Kone^{\rm s}),\quad  {\tt KGDadp}(\Kzero,{\rm BB1}), \quad {\tt KGDadp}(\Kzero,{\rm BB2})
      $$
      can be interpreted analogously, for which numerical experiments will be reported in Section \ref{subsec:compfoursteps}. 
     \end{remark}
\begin{algorithm}[h!!!]
    \caption{Adaptive KGD method ({\tt KGDadp})}
    \label{alg:global}
    \begin{algorithmic}[1]
    \STATE Set the first point $x_0$ and the first step $\alpha_0 >0$;
    \STATE Choose parameters $\epsilon > 0$, $0 < \eta < \frac{1}{3}$, an integer $M$ and $k = 0$;
    \WHILE{$\|g_k\|_2 > \epsilon$ }
        \WHILE{ $f(x_k-\alpha_k g_k) {>} \max_{0 \leq j \leq \min(k,M) } f(x_{k-j}) - \eta \alpha_k \|g_k\|_2^2$ }
        \STATE $\alpha_k \leftarrow {\Kzero(x_k;x_k,\alpha_k)}$;
        \ENDWHILE
        \STATE {Set $x_{k+1} = x_k - \alpha_k g_k$ and $\alpha_{k+1} = {\mathsf K}_1(x_{k+1};x_k,\alpha_k)$;}
        \STATE $k = k + 1$;
    \ENDWHILE
    \end{algorithmic}
\end{algorithm}

For the global convergence of Algorithm \ref{alg:global}, based on Lemmas \ref{lm4.1} and \ref{lm4.1}, we can apply the similar  argument in \cite[Section 3]{grip:1986}  to conclude
    \begin{theorem}\label{thm:global}
  The sequence $\{x_k\}$ from Algorithm \ref{alg:global} stays in $\Omega$, and {with the tolerance  $\epsilon=0$ in Step $3$,}  every limit point $x^*$ is a stationary point (i.e., $G(x^*) = 0$). Moreover, if the number of the stationary points of $f(x)$ on $\Omega$ is finite, then  $\{x_k\}$ converges.
    \end{theorem}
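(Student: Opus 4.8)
The plan is to view Algorithm~\ref{alg:global} as the steepest-descent direction $-g_k$ equipped with the nonmonotone Armijo line search of Grippo, Lampariello and Lucidi~\cite{grip:1986}, and to port their Section~3 analysis, using Lemma~\ref{lm4.1} to make the line search well defined and Lemma~\ref{lm4.2} to supply the step-size lower bound that argument needs. Introduce the running window maximum $V_k := \max_{0\le j\le \min(k,M)} f(x_{k-j}) = f(x_{\ell(k)})$. First I would record that the accepted step always obeys $f(x_{k+1}) \le V_k - \eta\,\alpha_k\|g_k\|_2^2$: the inner loop of Steps~4--6 shrinks $\alpha_k$ through $\Kzero$ until the test passes, and Lemma~\ref{lm4.1} guarantees it passes after finitely many shrinkages. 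A one-line induction using $\min(k+1,M)\le \min(k,M)+1$ shows $\{V_k\}$ is non-increasing; hence $f(x_k)\le V_k\le V_0=f(x_0)$, so the whole sequence lies in $\Omega$, which is the first assertion. Compactness of $\Omega$ bounds $f$ below on $\Omega$, so $V_k\downarrow V^*$ for some finite $V^*$.

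Next I would reproduce the heart of \cite[Section~3]{grip:1986}. Writing the acceptance inequality at the index $\ell(k)-1$ and passing to the limit forces $\alpha_{\ell(k)-1}\|g_{\ell(k)-1}\|_2^2\to0$, and the usual induction across the fixed-length window $M+1$ propagates the vanishing of the sufficient-decrease terms to the whole sequence, giving $\lim_k \alpha_k\|g_k\|_2^2=0$. To upgrade this to stationarity I need the accepted step-sizes bounded away from $0$. When the inner loop executes, the accepted $\alpha_k$ is an output of $\Kzero$, so Lemma~\ref{lm4.2} gives $\alpha_k\ge \bar\alpha := 2/\sqrt{12\xi L+3\Lambda_1}>0$ (consistent with the GLL backtracking logic: failing the nonmonotone test implies failing the monotone one, whose threshold $2(1-\eta)/\Lambda_1$ is computed inside the proof of Lemma~\ref{lm4.1}). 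When the loop is skipped, the directly inherited ${\mathsf K}_1$ step is bounded below by \eqref{A1}, since $s_{k-1}^{\T}y_{k-1}=s_{k-1}^{\T}\big(\int_0^1 H\,dt\big)s_{k-1}\le \Lambda_1\|s_{k-1}\|_2^2$ makes the long step $\ge 1/\Lambda_1$, while an oversized or wrong-signed trial would instead fail the acceptance test and be routed through the shrinking branch. With a uniform positive lower bound in force, if some limit point $x^*$ had $G(x^*)\neq0$, then along $x_{k_i}\to x^*$ we would get $\|g_{k_i}\|_2\to\|G(x^*)\|_2>0$ while $\alpha_{k_i}\ge\bar\alpha$, contradicting $\alpha_{k_i}\|g_{k_i}\|_2^2\to0$; hence every limit point is stationary.

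For the last assertion I also need the increments to vanish, $\|x_{k+1}-x_k\|_2=\alpha_k\|g_k\|_2\to0$; granting that the accepted step-sizes are additionally bounded above (again a consequence of the shrinking mechanism, which caps any oversized ${\mathsf K}_1$ trial), this follows from $\lim_k\alpha_k\|g_k\|_2^2=0$. I would then invoke the Ostrowski-type fact that a bounded sequence with $\|x_{k+1}-x_k\|_2\to0$ has a compact \emph{connected} set of limit points. Since that set is contained in the stationary set of $f$ on $\Omega$, which is assumed finite, and a connected subset of a finite set is a singleton, $\{x_k\}$ has a unique limit point and therefore converges.

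The genuinely delicate point is the uniform step-size bound underlying both the stationarity step and the increment-vanishing step. Lemma~\ref{lm4.2} only controls $\Kzero$ outputs, so the real work is to show that the ${\mathsf K}_1$ steps accepted without any shrinking neither collapse to $0$ nor blow up at a non-stationary limit, the sign of $s_{k-1}^{\T}y_{k-1}$ being the subtlety in the absence of convexity. Reconciling this with Lemma~\ref{lm4.1}, which shows $\Kzero$ strictly contracts the trial while the test fails, is the crux of coupling the two lemmas; by contrast the GLL window induction, although technical, is routine once $\{V_k\}$ is monotone and the uniform acceptance threshold $2(1-\eta)/\Lambda_1$ is in hand.
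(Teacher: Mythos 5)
Your overall route is the same as the paper's: the paper's proof also reads Algorithm~\ref{alg:global} as a nonmonotone Armijo (GLL) scheme, uses Lemma~\ref{lm4.1} to guarantee finite termination of the inner loop, introduces the window maximum $f(x_{\ell(k)})=\max_{0\le j\le\min(k,M)}f(x_{k-j})$, and then defers to the proof of \cite[Theorem in Section 3]{grip:1986}; your connected-limit-set ending for the finitely-many-stationary-points claim is the standard conclusion of that theorem. So the skeleton you lay out (monotonicity of $V_k$, hence $x_k\in\Omega$; $\alpha_{\ell(k)-1}\|g_{\ell(k)-1}\|_2^2\to 0$; propagation across the window; a step-size lower bound to get stationarity; $\|x_{k+1}-x_k\|_2\to 0$ plus connectedness to get convergence) is exactly what ``follow GLL'' means here.

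The genuine gap is the mechanism you assert, as fact, to control the ${\mathsf K}_1$ steps accepted without any shrinking: the claim that ``an oversized or wrong-signed trial would instead fail the acceptance test and be routed through the shrinking branch.'' This is false. The acceptance test is $f(x_k-\alpha_k g_k)\le \max_{0\le j\le\min(k,M)}f(x_{k-j})-\eta\alpha_k\|g_k\|_2^2$; if the inherited trial is negative, the right-hand side equals $V_k+\eta|\alpha_k|\,\|g_k\|_2^2>V_k\ge f(x_k)$, so a short uphill move can pass the test and be accepted outright --- nothing routes it to the shrinking branch (and if it were routed there, Lemmas~\ref{lm4.1} and~\ref{lm4.2} would not apply, since both assume a positive input; the argument of the square root in $\Kzero$ can even become negative for $\alpha<0$). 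Likewise, an arbitrarily long positive trial that happens to land at a point of low $f$ also passes the test, so the shrinking mechanism does not ``cap'' oversized steps. This one claim carries three loads in your argument: positivity of all accepted steps (without which $V_k$ need not be monotone, undermining even the first claim $x_k\in\Omega$), the lower bound for unshrunk steps (which additionally presupposes $s_{k-1}^{\T}y_{k-1}>0$, unavailable without convexity --- and even when $s_{k-1}^{\T}y_{k-1}>0$ it yields no upper bound, since $s_{k-1}^{\T}y_{k-1}$ can be arbitrarily small), and the upper bound needed for $\|x_{k+1}-x_k\|_2\to 0$ in the final connectedness step. Your closing paragraph concedes that establishing these bounds is ``the real work,'' which is an accurate self-assessment: the proposal identifies the crux but does not close it, and what it offers in its place is an incorrect routing argument. (The paper's own proof is admittedly silent on this same point --- it leans on the remark that ${\mathsf K}_1$ only provides an initial step-size and cites GLL --- but silence is different from building the stationarity and increment-vanishing steps on a mechanism the algorithm does not possess.)
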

\begin{proof}        {At the $k$th iteration, according to Lemma \ref{lm4.1}, we know that the step-size $\alpha_k$ in Step 5 shrinks so that  the condition  $$f(x_k-\alpha_k g_k)  {\le} \max_{0 \leq j \leq \min(k,M) } f(x_{k-j}) - \eta \alpha_k \|g_k\|_2^2$$
    can be fulfilled. With this computed $\alpha_k$, the next iteration is $x_{k+1}=x_k - \alpha_k g_k$ (Step 7). Note the $\alpha_{k+1} = {\mathsf K}_1(x_{k+1};x_k,\alpha_k)$ in Step 7 only provides an initial step-size for $x_{k+1}$.  For   outer iterations, let $f(x_{\ell(k)})=\max_{0\le j\le  \min(k,M) } f(x_{k-j})$, and we can follow the same proof  of \cite[Theorem in Section 3]{grip:1986} to conclude the result.}
    \end{proof}

\subsection{Local R-linear convergence}
By Theorem \ref{thm:global}, we have known that every limit point of  {\tt KGDadp}  is a stationary point $x^*$.  Because a saddle point is numerically unstable, and the condition \eqref{eq:ruleKGD0} also ensures that the iterate $x_{k+1}$ delivers sufficiently improvement in the previous $M$ steps, a limit point $x^*$ of the sequence $\{x_k\}$ is most likely a local minimizer. In this subsection, we shall perform a local convergence analysis of  {\tt KGDadp} around a  minimizer $x^*$ with additionally the following assumption:

    \begin{assumption}\label{assum:A3}
    There exist a radius $\rho>0$ and a positive constant $\Lambda_2 > 0$ such that
    \begin{equation}{\label{A3} }
        v^{\T} H(x) v \geq \Lambda_2 \|v\|_2^2
    \end{equation}
    for any $v\in \bbR^n$ and $x \in B_{\rho} (x^*)$, where $x^*$ is a minimizer of $f(x)$.
\end{assumption}
 
Relying upon the equivalence (see \eqref{eq:BBl} and \eqref{eq:BBs}) between the long/short KGD step-size and  the long/short BB step-size for the quadratic model, we can use the same asymptotic technique developed in \cite{dahs:2006} and \cite{budh:2019} to show the R-linear convergence of {\tt KGDadp}.  In particular, denote $ H = H(x^*)$  and  construct a strongly quadratic approximation model 
    \begin{equation}\nonumber
        \fhat(x) = \frac{1}{2} (x-x^*)^{\T} H (x-x^*) + f(x^*),
    \end{equation}
 whose  gradient is
    \begin{equation}\nonumber
        \Ghat(x) = H (x - x^*).
    \end{equation}
Correspondingly, we define a sequence $\{\xhat_{k,j}\}_{j=0}^{\infty}$ by
    \begin{equation}\label{eq:subKGDseq}
        \begin{cases}
            \xhat_{k,0}=x_k,\\
            \xhat_{k,j+1}=\xhat_{k,j} - \ahat_{k,j} \Ghat(\xhat_{k,j}), \, j\geq 0,
        \end{cases} ~ \mbox{where}~\ahat_{k,j} = 
        \begin{cases}
        \alpha_k, \text{ if $j=0$} \\
        {\Konehat(\xhat_{k,j};\xhat_{k,j-1},\ahat_{k,j-1})}, \text{ if $j>0$},
        \end{cases}
    \end{equation}
    and $\Konehat$ is either the long or the short  KGD step-size operator applied on $\fhat$. For example, if the long KGD step is used, then we have
    \begin{equation}\nonumber
        {\Konehat(\xhat_{k,j};\xhat_{k,j-1},\ahat_{k,j-1}) =\Kone(\xhat_{k,j};\xhat_{k,j-1} ,\ahat_{k,j-1}) }=\frac{\ahat_{k,j-1} }{2+ \frac{2(\fhat(\xhat_{k,j})-\fhat(\xhat_{k,j-1} ) ) }{\ahat_{k,j-1} \|\Ghat(\xhat_{k,j-1} ) \|_2^2 }  },
    \end{equation}
   where, for convenience, we have defined $\shat_{k,j}= \xhat_{k,j}-\xhat_{k,j-1}$ for $j > 0$. Since $\fhat(x)$ is strongly convex and $x^*$ is its minimizer, we have
    \begin{proposition}
        There exists a positive integer $N$ dependent only on $\Lambda_1$ and $\Lambda_2$, such that for any $x_{k} \in \bbR^n$ it holds
        \begin{equation}\nonumber
            \|\xhat_{k,j}-x^*\|_2\leq \frac{1}{2}\|x_{k}- x^*\|_2, \, \forall j\ge N.
        \end{equation}
    \end{proposition}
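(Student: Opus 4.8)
The plan is to reduce the claim to the behaviour of the BB iteration on the fixed quadratic $\fhat$ and then invoke the R-linear rate from Theorem \ref{thm3.1}(ii) and Theorem \ref{thm3.2}. First I would note that, since $\fhat$ is a strongly convex quadratic with constant Hessian $H=H(x^*)$, the equivalences \eqref{eq:BBl} and \eqref{eq:BBs} hold verbatim for $\Konehat$: the subsequence $\{\xhat_{k,j}\}_{j\ge0}$ from \eqref{eq:subKGDseq} is exactly the long (resp.\ short) BB iteration for $\fhat$ started at $\xhat_{k,0}=x_k$ with initial step $\ahat_{k,0}=\alpha_k$. Writing $d_j:=\xhat_{k,j}-x^*$, the recursion becomes the linear map $d_{j+1}=(I-\ahat_{k,j}H)d_j$. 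Applying Assumptions \ref{assum:A1} and \ref{assum:A3} at $x=x^*$ shows that every eigenvalue of $H$ lies in $[\Lambda_2,\Lambda_1]$, whence $\kappa=\kappa(H)\le\Lambda_1/\Lambda_2$; moreover, for $j\ge1$ each BB step-size is a Rayleigh-quotient ratio and thus satisfies $1/\Lambda_1\le\ahat_{k,j}\le1/\Lambda_2$.

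Next I would convert the linear rate into a uniform contraction after finitely many steps. Theorem \ref{thm3.1}(ii) and Theorem \ref{thm3.2} ensure that the BB/KGD sequence on a quadratic converges at least linearly with rate $r:=1-\tfrac{1}{\kappa}$. The structural fact I would exploit is scale invariance: every step-size $\ahat_{k,j}$ is a homogeneous-degree-zero quotient, invariant under $d_0\mapsto c\,d_0$, and the update $d_{j+1}=(I-\ahat_{k,j}H)d_j$ is linear; hence the normalized errors $d_j/\|d_0\|_2$ depend only on the direction $d_0/\|d_0\|_2$ and on the initial step $\alpha_k$, not on $\|d_0\|_2=\|x_k-x^*\|_2$. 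Consequently the linear rate can be recorded in the scale-free form $\|d_j\|_2\le C\,r^{\,j}\|d_0\|_2$ with $C$ independent of the magnitude of $x_k-x^*$. Taking $N=\big\lceil\log(2C)/\log(1/r)\big\rceil$ then gives, for every $j\ge N$, the chain $\|\xhat_{k,j}-x^*\|_2=\|d_j\|_2\le C\,r^{\,j}\|d_0\|_2\le C\,r^{\,N}\|d_0\|_2\le\tfrac12\|x_k-x^*\|_2$; since $C$ and $r$ depend only on $\kappa\le\Lambda_1/\Lambda_2$, so does $N$.

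The crux of the argument is to show that $C$, and hence $N$, is genuinely independent of $x_k$. Uniformity in the magnitude of $x_k-x^*$ comes for free from scale invariance, but uniformity over the direction $d_0/\|d_0\|_2$ and over the single potentially large initial step $\ahat_{k,0}=\alpha_k$ is the delicate point, because a large $\alpha_k$ makes the first update $d_1=(I-\alpha_kH)d_0$ expansive before the self-correcting BB steps set in. I would resolve this by observing that $\alpha_k$ is confined to a bounded interval fixed by $\Lambda_1,\Lambda_2$: its lower bound is supplied by Lemma \ref{lm4.2}, while the upper bound $1/\Lambda_2$ follows from the Rayleigh-quotient form of the ${\mathsf K}_1$ step together with \eqref{A1} and \eqref{A3}. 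The initial data $(d_0/\|d_0\|_2,\alpha_k)$ then range over the compact set $S^{n-1}\times[\alpha_{\min},\alpha_{\max}]$, and combining the per-direction R-linear bound with this compactness---equivalently, tracking the constant in the asymptotic argument of \cite{dahs:2006,budh:2019} and verifying it depends only on $\kappa$ and on the product $\alpha_{\max}\lambda_n\le\Lambda_1/\Lambda_2$---produces a single uniform $C$, completing the proof.
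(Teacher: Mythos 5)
Your proposal follows the same core route as the paper's proof, which is literally one line: recognize $\{\xhat_{k,j}\}_{j\ge 0}$ as the long/short BB iteration on the fixed quadratic $\fhat$, apply Theorems \ref{thm3.1} and \ref{thm3.2}, and assert the conclusion follows. Everything you add on top of that reduction---the scale invariance of the BB dynamics, which removes the dependence on $\|x_k-x^*\|_2$; the observation that a uniform $N$ is impossible without a bound on the initial step $\ahat_{k,0}=\alpha_k$ (a single update $d_1=(I-\alpha_k H)d_0$ is arbitrarily expansive as $\alpha_k\to\infty$); and the confinement of $\alpha_k$ to a compact interval via Lemma \ref{lm4.2} and \eqref{eq:bndK1}---is precisely the uniformity content that the paper's proof suppresses. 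That part of your analysis is sound, and it is the real mathematical substance of the statement.

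The one step that is not rigorous as written is the one you yourself call the crux. A pointwise R-linear bound $\|d_j\|_2\le C(u,\alpha)\,r^{j}\|d_0\|_2$ for each initial datum $(u,\alpha)\in S^{n-1}\times[\alpha_{\min},\alpha_{\max}]$, combined with compactness of that set, does not by itself produce a single uniform $C$: the per-trajectory constant is not known to be continuous, or even locally bounded, as a function of the initial data, so compactness cannot be invoked directly. Two repairs are available. (a) A covering/composition argument: each finite-step map $(u,\alpha)\mapsto d_j$ is continuous, so pointwise convergence plus compactness gives a uniform horizon $N_\theta$ within which every trajectory from the compact set shrinks by a fixed factor $\theta<1$; since all step-sizes with $j\ge 1$ lie in $[1/\Lambda_1,1/\Lambda_2]$, each step expands the error by at most $\max\{1-\Lambda_2/\Lambda_1,\ \Lambda_1/\Lambda_2-1\}$, and iterating these uniform blocks (the post-checkpoint state lies again in the compact set) yields a uniform geometric envelope and hence $N$. (b) Your alternative of tracking constants is also viable, but the correct sources are the quadratic analyses \cite{dali:2002,lisu:2021}, whose constants are uniform over starting points for a fixed quadratic (they depend on $n$ and $\kappa$), rather than \cite{dahs:2006,budh:2019}, whose asymptotic arguments concern the nonquadratic-to-quadratic comparison and in fact presuppose exactly the present proposition. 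A final remark: because those constants do depend on the dimension $n$, the assertion that $N$ depends ``only on $\Lambda_1$ and $\Lambda_2$'' should be read with $n$ fixed; that imprecision is in the paper itself, not introduced by you.
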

   \begin{proof}
   Apply Theorems \ref{thm3.1} and \ref{thm3.2} to the sequence $\{\xhat_{k,j}\}_{j=0}^\infty$ to get $\lim_{j\rightarrow \infty}\xhat_{k,j}= x^*$ and the conclusion follows.
   \end{proof}

   The following two propositions can also be verified readily.
    \begin{proposition}
        Under  Assumptions \ref{A1} and \ref{assum:A3}, for any $x,y\in B_{\rho} (x^*)$, we have
        \begin{equation} {\label{prop:1} }
            \Lambda_2\|x-x^*\|_2\leq \|G(x)\|_2 \leq \Lambda_1 \|x-x^*\|_2,
        \end{equation}
        \begin{equation}{\label{prop:1'} }
            \frac{\Lambda_2}{2} \|y-x\|_2^2 \leq  f(y)-f(x) - G(x)^{\T}(y-x)   \leq \frac{\Lambda_1}{2} \|y-x \|_2^2,\, 
        \end{equation}
        \begin{equation}{\label{prop:2} }
            \|G(x)-H(x-x^*) \|_2\leq \frac{L}{2}\|x-x^*\|_2^2
        \end{equation}
        and
        \begin{equation} {\label{prop:3} }
            \left|f(x) - f(x^*)-\frac{1}{2} (x-x^*)^{\T} H (x-x^*)\right| \leq \frac{L}{6}\|x-x^*\|_2^3.
        \end{equation}
    \end{proposition}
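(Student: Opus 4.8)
The plan is to treat all four inequalities with a single engine: the integral (Taylor / fundamental-theorem-of-calculus) form of the remainder, combined with the fact that $G(x^*)=0$ at the minimizer, the quadratic-form bounds $\Lambda_2\|v\|_2^2\le v^{\T}H(z)v\le\Lambda_1\|v\|_2^2$ supplied by \eqref{A1} and \eqref{A3}, and the Lipschitz bound \eqref{A2}. The whole point is that differences of $G$ and of $f$ along the segment joining two points in $B_{\rho}(x^*)$ can be written as integrals of $H$ against the displacement vector; since $B_{\rho}(x^*)$ is convex, every intermediate point $x^*+t(x-x^*)$ (or $x+t(y-x)$) stays in $B_{\rho}(x^*)$, so both the upper and lower eigenvalue bounds apply uniformly under the integral sign.

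For \eqref{prop:1}, I would write $G(x)=G(x)-G(x^*)=\int_0^1 H(x^*+t(x-x^*))(x-x^*)\,\mathrm{d}t$. Pairing with $x-x^*$ and using the quadratic-form bounds gives $\Lambda_2\|x-x^*\|_2^2\le G(x)^{\T}(x-x^*)\le\Lambda_1\|x-x^*\|_2^2$; the lower bound then follows by Cauchy--Schwarz (dividing through by $\|x-x^*\|_2$), while the upper bound follows by taking norms inside the integral and using that, for a symmetric $H(z)$, the operator norm $\|H(z)\|_2$ equals its largest eigenvalue and is therefore $\le\Lambda_1$. For \eqref{prop:1'} I would use the second-order integral remainder
\[
f(y)-f(x)-G(x)^{\T}(y-x)=\int_0^1(1-t)\,(y-x)^{\T}H(x+t(y-x))(y-x)\,\mathrm{d}t,
\]
and bound the integrand by $\Lambda_2\|y-x\|_2^2$ from below and $\Lambda_1\|y-x\|_2^2$ from above; since $\int_0^1(1-t)\,\mathrm{d}t=\tfrac12$, this yields the claimed factor $\tfrac12$ on each side.

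The last two estimates are where the Lipschitz constant $L$ enters. For \eqref{prop:2} I would subtract the fixed Hessian inside the integrand, $G(x)-H(x-x^*)=\int_0^1[H(x^*+t(x-x^*))-H(x^*)](x-x^*)\,\mathrm{d}t$, and apply \eqref{A2} to bound the bracket by $Lt\|x-x^*\|_2$; integrating $\int_0^1 t\,\mathrm{d}t=\tfrac12$ gives $\tfrac{L}{2}\|x-x^*\|_2^2$. For \eqref{prop:3} the same subtraction inside the second-order remainder produces the weight $t(1-t)$, and $\int_0^1 t(1-t)\,\mathrm{d}t=\tfrac16$ delivers the constant $\tfrac{L}{6}$. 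None of the steps is genuinely hard; the only point requiring care is the bookkeeping of the domain of validity --- making sure each segment lies inside $B_{\rho}(x^*)$ so that \eqref{A3} applies (and inside $\Omega$ so that \eqref{A1} applies) --- together with recalling that $H=H(x^*)$ is the fixed Hessian at the minimizer, so that $\Lambda_2\|v\|_2^2\le v^{\T}Hv\le\Lambda_1\|v\|_2^2$ holds for $H$ itself as the limiting case of the same bounds.
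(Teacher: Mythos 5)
Your proof is correct, and it is exactly the argument the paper has in mind: the paper offers no proof of this proposition at all (it is dismissed with ``can also be verified readily''), and the integral-remainder form of Taylor's theorem combined with $G(x^*)=0$, the eigenvalue bounds \eqref{A1}, \eqref{A3}, and the Lipschitz bound \eqref{A2} is the standard way to verify all four estimates, with the weights $\int_0^1(1-t)\,\mathrm{d}t=\tfrac12$, $\int_0^1 t\,\mathrm{d}t=\tfrac12$, $\int_0^1 t(1-t)\,\mathrm{d}t=\tfrac16$ producing precisely the stated constants. The only point to phrase carefully is your remark that the operator norm of symmetric $H(z)$ equals its largest eigenvalue: that holds because \eqref{A3} makes $H(z)$ positive definite on the segment (so all eigenvalues lie in $[\Lambda_2,\Lambda_1]$); for a merely symmetric matrix the norm is the largest eigenvalue in absolute value, so this step genuinely uses \eqref{A3} and not just \eqref{A1}.
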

    \begin{proposition}
        Under {{\bf Assumptions} \ref{assum:A1} and \ref{assum:A3}},  for any $x \in B_{\rho}(x^*)$ and $\Delta \tau>0$ satisfying  $\xnew =x-\Delta \tau G(x) \in B_{\rho} (x^*)$, we have  
        \begin{equation}\label{eq:bndK1}
            \frac{1}{\Lambda_1} \leq {\Kone(\xnew ;x,\Delta \tau)} \leq \frac{1}{\Lambda_2}.
        \end{equation}
    \end{proposition}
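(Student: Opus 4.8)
The plan is to unfold the definition \eqref{eq:KGD1} and bound its denominator directly using the two-sided estimate \eqref{prop:1'}. Recall that
\begin{equation}\nonumber
    \Kone(\xnew;x,\Delta\tau) = \frac{\Delta\tau}{\,2 + \dfrac{2\bigl(f(\xnew)-f(x)\bigr)}{\Delta\tau\,\|G(x)\|_2^2}\,},
\end{equation}
so the whole task reduces to sandwiching the quantity $f(\xnew)-f(x)$ that appears in the denominator. First I would record the two elementary identities coming from $\xnew - x = -\Delta\tau\, G(x)$, namely $\|\xnew - x\|_2^2 = \Delta\tau^2\|G(x)\|_2^2$ and $G(x)^{\T}(\xnew - x) = -\Delta\tau\|G(x)\|_2^2$. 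These are exactly the pieces that feed into \eqref{prop:1'}.

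Next, since both $x$ and $\xnew$ lie in $B_{\rho}(x^*)$ by hypothesis, I would apply the inequality \eqref{prop:1'} with $y=\xnew$, which gives
\begin{equation}\nonumber
    \frac{\Lambda_2}{2}\,\Delta\tau^2\|G(x)\|_2^2 \;\le\; f(\xnew)-f(x) + \Delta\tau\|G(x)\|_2^2 \;\le\; \frac{\Lambda_1}{2}\,\Delta\tau^2\|G(x)\|_2^2 .
\end{equation}
Dividing through by $\tfrac12\Delta\tau\|G(x)\|_2^2$ and adding the constant $2$, the denominator of $\Kone$ is seen to satisfy
\begin{equation}\nonumber
    \Lambda_2\,\Delta\tau \;\le\; 2 + \frac{2\bigl(f(\xnew)-f(x)\bigr)}{\Delta\tau\,\|G(x)\|_2^2} \;\le\; \Lambda_1\,\Delta\tau ,
\end{equation}
where the two constant terms $\pm 2$ cancel cleanly against the $-\Delta\tau\|G(x)\|_2^2$ contribution. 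Taking reciprocals (the denominator is positive because $\Lambda_2>0$ and $\Delta\tau>0$) and multiplying by $\Delta\tau$ yields $\tfrac{1}{\Lambda_1}\le \Kone(\xnew;x,\Delta\tau)\le \tfrac{1}{\Lambda_2}$, which is precisely \eqref{eq:bndK1}; the factor $\Delta\tau$ conveniently cancels, so the bounds are independent of the trial step.

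There is essentially no hard step here: the estimate is a direct consequence of the strong convexity/Lipschitz-Hessian bounds already packaged in \eqref{prop:1'}. The only point requiring a word of care is the implicit assumption $G(x)\neq 0$, needed for the division by $\|G(x)\|_2^2$ to be meaningful; by the left inequality of \eqref{prop:1} this is equivalent to $x\neq x^*$, which is harmless since at $x=x^*$ the iteration has already reached the minimizer and the step-size is irrelevant.
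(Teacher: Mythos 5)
Your proof is correct and is essentially the argument the paper has in mind: the paper states this proposition without proof (``can also be verified readily''), and the intended verification is exactly your computation --- unfold the definition \eqref{eq:KGD1}, note that $G(x)^{\T}(\xnew-x)=-\Delta\tau\|G(x)\|_2^2$ turns the sandwich \eqref{prop:1'} into the two-sided bound $\Lambda_2\Delta\tau \le 2+\frac{2(f(\xnew)-f(x))}{\Delta\tau\|G(x)\|_2^2}\le \Lambda_1\Delta\tau$ on the denominator, and take reciprocals. Your side remark on $G(x)\neq 0$ is also apt, since $\Kone$ is undefined otherwise.
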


The proof of the local R-linear convergence of $\{x_k\}$ follows exactly the argument in \cite[Theorem 2.3]{dahs:2006} for the BB step, in which   a  key lemma (see the counterpart lemma in \cite[Lemma 2.2]{dahs:2006})  is as follows. 
\begin{lemma}{\label{lm1} }
       Under {{\bf Assumptions} \ref{assum:A1} and \ref{assum:A3}},  for any fixed positive integer $N$, there exist positive constants $\delta$ and {$c_1$} with the following property: for any $x_k \in B_{\delta} (x^*)$ , ${m}\in [0,N]$ with
        \begin{equation}{\label{condition} }
            \|\xhat_{k,j} - x^*\|_2 \geq \frac{1}{2} \|\xhat_{k,0} - x^* \|_2 \text{ for all $j \in [0,\max \{ 0,{m}-1 \}] $},
        \end{equation}
       it holds that 
        \begin{equation}\label{C1}
            x_{k+j} \in B_{\rho}(x^*)\quad \mbox{and}~~ \|x_{k+j}-\xhat_{k,j}\|_2\leq {c_1} \|x_k - x^*\|_2^2, \quad \forall j\in [0,{m}],
        \end{equation}
        where the sequence $\{x_{k+j}\}$ is from \eqref{eq:lsKGD} while  $\{\xhat_{k,j}\}$ is from \eqref{eq:subKGDseq}.
    \end{lemma}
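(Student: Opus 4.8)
The plan is to establish \eqref{C1} by a finite induction on $j\in[0,m]$, carrying along not only the point discrepancy $\|x_{k+j}-\xhat_{k,j}\|_2$ but also an auxiliary step-size discrepancy bound of the form $|\alpha_{k+j}-\ahat_{k,j}|\le c_2\|x_k-x^*\|_2$; the two bounds must be proved together, since each feeds into the other through the recursions \eqref{eq:lsKGD} and \eqref{eq:subKGDseq}. The base case $j=0$ is immediate, because $x_{k+0}=\xhat_{k,0}=x_k$ and $\alpha_{k}=\ahat_{k,0}$, so both discrepancies vanish, while $x_k\in B_\delta(x^*)\subseteq B_\rho(x^*)$ as soon as $\delta\le\rho$. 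Here the relevant actual iteration is the pure KGD recursion \eqref{eq:lsKGD}, so $\alpha_{k+j}$ is exactly the output ${\mathsf K}_1(x_{k+j};x_{k+j-1},\alpha_{k+j-1})$, with no Regime-0 shrinking to account for. Throughout I would keep $\delta$ small enough that every iterate with $j\le N$ stays in $B_\rho(x^*)$: the shadow sequence obeys $\|\xhat_{k,j}-x^*\|_2\le C\|x_k-x^*\|_2$ for $j\le N$ (a bounded number of steps with step-sizes bounded above by \eqref{eq:bndK1} and its short-step analogue), and then $\|x_{k+j}-x^*\|_2\le\|\xhat_{k,j}-x^*\|_2+c_1\|x_k-x^*\|_2^2$ remains below $\rho$.

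For the inductive step, assume both bounds for all indices up to $j<m$. Subtracting the two updates gives $x_{k+j+1}-\xhat_{k,j+1}=(x_{k+j}-\xhat_{k,j})-\bigl(\alpha_{k+j}G(x_{k+j})-\ahat_{k,j}\Ghat(\xhat_{k,j})\bigr)$, and I would split the bracketed term as $\alpha_{k+j}\bigl(G(x_{k+j})-\Ghat(x_{k+j})\bigr)+\alpha_{k+j}\bigl(\Ghat(x_{k+j})-\Ghat(\xhat_{k,j})\bigr)+(\alpha_{k+j}-\ahat_{k,j})\Ghat(\xhat_{k,j})$. The first piece is $O(\|x_k-x^*\|_2^2)$ by \eqref{prop:2} and the boundedness of $\alpha_{k+j}$; the second equals $\alpha_{k+j}H(x_{k+j}-\xhat_{k,j})$, hence bounded by $\Lambda_1$ times the inductive point discrepancy, again $O(\|x_k-x^*\|_2^2)$; and the third is controlled once the step-size discrepancy is known, since $\|\Ghat(\xhat_{k,j})\|_2=\|H(\xhat_{k,j}-x^*)\|_2\le\Lambda_1\|\xhat_{k,j}-x^*\|_2=O(\|x_k-x^*\|_2)$, so that product is $O(\|x_k-x^*\|_2)\cdot O(\|x_k-x^*\|_2)=O(\|x_k-x^*\|_2^2)$. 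Summing yields $\|x_{k+j+1}-\xhat_{k,j+1}\|_2\le c_1\|x_k-x^*\|_2^2$ with an updated constant.

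The main obstacle is the auxiliary estimate $|\alpha_{k+j+1}-\ahat_{k,j+1}|=O(\|x_k-x^*\|_2)$. Here I would exploit that the KGD step-sizes \eqref{eq:KGD1} and \eqref{eq:shortKGD} are built from ratios in which the function-value differences and the squared gradient norms are homogeneous of degree two in the distance to $x^*$. The $f$-versus-$\fhat$ and $G$-versus-$\Ghat$ discrepancies, governed by \eqref{prop:2} and \eqref{prop:3}, together with the inductive discrepancies in $x_{k+j}$ (degree two) and $\alpha_{k+j}$ (degree one), perturb those numerators and denominators only by $O(\|x_k-x^*\|_2^3)$. The decisive point is that the relevant denominators, namely the squared gradient norm $\|G(x_{k+j})\|_2^2$ in \eqref{eq:KGD1} and the squared gradient-difference norm in \eqref{eq:shortKGD}, are bounded below by a positive multiple of $\|x_k-x^*\|_2^2$: this is exactly where condition \eqref{condition} enters, forcing $\|\xhat_{k,j}-x^*\|_2\ge\tfrac12\|x_k-x^*\|_2$ for $j\le m-1$, whence $\|\Ghat(\xhat_{k,j})\|_2\ge\tfrac{\Lambda_2}{2}\|x_k-x^*\|_2$ and $\|\xhat_{k,j}-\xhat_{k,j-1}\|_2=\Theta(\|x_k-x^*\|_2)$ keep these denominators nondegenerate. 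Dividing a degree-three perturbation by a degree-two quantity bounded below produces a change of order $\|x_k-x^*\|_2$, which closes the auxiliary induction.

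Finally, because $m\le N$ is fixed in advance, the constants $c_1$ and $c_2$ satisfy recursions iterated at most $N$ times, so they stay finite (depending on $N$, $\Lambda_1$, $\Lambda_2$, $L$, and the step-size bounds) and the induction terminates after finitely many steps. Choosing $\delta$ small enough relative to these constants guarantees $x_{k+j}\in B_\rho(x^*)$ for all $j\in[0,m]$, completing the argument. The only genuinely delicate estimate is the step-size comparison above; once the denominators are pinned below by \eqref{condition}, the remainder is routine order-of-magnitude bookkeeping, mirroring \cite[Lemma 2.2]{dahs:2006}.
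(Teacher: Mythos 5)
Your proposal is correct and follows essentially the same route as the paper's own proof (itself modeled on \cite[Lemma 2.2]{dahs:2006}): a finite induction that couples the quadratic-order point discrepancy $\|x_{k+j}-\xhat_{k,j}\|_2$ with the linear-order step-size discrepancy $|\alpha_{k+j}-\ahat_{k,j}|$, controls the $f$-versus-$\fhat$ and $G$-versus-$\Ghat$ perturbations via \eqref{prop:2}--\eqref{prop:3}, and uses \eqref{condition} to bound the relevant step-size denominators below by a positive multiple of $\|x_k-x^*\|_2^2$ so that a cubic perturbation yields a linear change in the step-size. The only cosmetic differences are that the paper tracks five induction quantities \eqref{C1}--\eqref{C5} explicitly (function-value, gradient, and step-length bounds as separate items) and disposes of the short KGD step via the change-of-variables reduction behind Theorem \ref{thm3.2}, whereas you derive those auxiliary bounds inline and handle the short step's gradient-difference denominator directly.
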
 
    \begin{proof} The proof closely follows  that of \cite[Lemma 2.2]{dahs:2006}  with only proper modifications. 
    We prove  all the following inequalities together with \eqref{C1}:
    \begin{align}
            \left|f(x_{k+j})-\fhat(\xhat_{k,j})\right| &\leq {c_2} \|x_k-x^*\|_2^3, {\label{C2}} \\
            \|g_{k+j} - \ghat_{k,j}  \|_2& \leq {c_3} \|x_k - x^*\|_2^2, {\label{C3} } \\
            \|s_{k+j}\|_2 &\leq {c_4}\|x_k - x^*\|_2, {\label{C4} } \\
            |\alpha_{k+j}- \ahat_{k,j}| &\leq {c_5}\|x_k - x^*\|_2, {\label{C5} }
        \end{align}
        for $x_k \in B_{\delta} (x^*)$, where {$c_i$ are dependent on $N$, $\Lambda_1$, $\Lambda_2$, $L$ for $i=1,\dots,5$.}

{To prove  \eqref{C1}-\eqref{C5}, we apply the induction argument and introduce $c_{i,m}~(i=1,\dots,5)$ to represent the constants corresponding to  \eqref{C1}-\eqref{C5}, respectively, with the induction index $m$.
 For $m=0$, } taking $\delta = \rho$ leads to \eqref{C1}, and by \eqref{prop:3}, 
           $ |f(x_k) - \fhat(\xhat_{k,0}) | \leq \frac{L}{6} \|x_k - x^*\|_2^3,$
        it yields  \eqref{C2}. Furthermore  based on \eqref{prop:2},  we have 
           $ \|g_k-\ghat_{k,0}\|_2\leq \frac{L}{2}\|x_k - x^*\|_2^2,$
        and  thus \eqref{C3} holds. \eqref{C4} is true due to  $\delta = \rho$, $x_k \in B_{\delta}(x^*)$ and   
        \begin{equation}\nonumber
            \|s_k\|_2 = \|\alpha_k g_k\|_2 \leq \frac{\Lambda_2}{\Lambda_1} \|x_k - x^*\|_2. \quad \mbox{(by ~\eqref{prop:1})}
        \end{equation}
Finally,  \eqref{C5} holds with $\alpha_k=\ahat_{k,0}$.  {Thus, for $m=0$,  \eqref{C1}-\eqref{C5} hold with $c_{1,0}=0$, $c_{2,0}=\frac{L}{6}$, $c_{3,0}=\frac{L}{2}$, $c_{4,0}=\frac{\Lambda_2}{\Lambda_1}$ and $c_{5,0}=0$.}

Now, we assume  the induction hypothesis:  there exist $m\in [1,N)$  (we have proved the case $m=0$) and $\delta >0$ so   that if \eqref{condition} is true  $\forall j\in [0,m-1]$, then \eqref{C1}-\eqref{C5} hold    $\forall j\in [0,m]$. {That is, we have
\begin{subequations}\label{eq:C15}
    \begin{align}\label{eq:C1}
            \|x_{k+j}-\xhat_{k,j}\|_2 &\leq c_{1,m} \|x_k - x^*\|_2^2, \\\label{eq:C2}
            \left|f(x_{k+j})-\fhat(\xhat_{k,j})\right| &\leq c_{2,m} \|x_k-x^*\|_2^3,  \\\label{eq:C3}
            \|g_{k+j} - \ghat_{k,j}  \|_2& \leq c_{3,m} \|x_k - x^*\|_2^2,  \\\label{eq:C4}
            \|s_{k+j}\|_2 &\leq c_{4,m}\|x_k - x^*\|_2,  \\\label{eq:C5}
            |\alpha_{k+j}- \ahat_{k,j}| &\leq c_{5,m}\|x_k - x^*\|_2, 
        \end{align}
\end{subequations} 
for all  $x_k \in B_{\delta_{m}} (x^*)$, $j \in [0,m]$. The idea is to show that by choosing a smaller  $\delta >0$,  $m$ can be increased to $m+1$.}

Suppose  \eqref{condition} holds $\forall j \in [0,m]$. By the induction hypothesis and \eqref{eq:C4}, we have
        \begin{equation} {\label{eq:2} }
            \|x_{k+m+1}-x^*\|_2\leq \|x_k-x^*\|_2+ \sum_{i=0}^m\|s_{k+i}\|_2  \leq { (m+1)c_{4,m}}\|x_k - x^*\|_2,
        \end{equation}
        which implies that  by reducing {$\delta_{m+1} < \frac{\delta_m}{(m+1)c_{4,m}}$}, we can keep  $x_{k+m+1} \in B_{\rho}(x^*)$. 
        By the same argument  of the proof for \cite[Lemma 2.2]{dahs:2006}, \eqref{C1}, \eqref{C3} and \eqref{C4} hold for $m+1$. Next we will prove that \eqref{C2} and \eqref{C5} also hold for $m+1$.
        
        For \eqref{C2}, using the triangle inequality
        \begin{equation}\nonumber{\label{eq:3} }
                |f(x_{k+m+1})-\fhat(\xhat_{k,m+1} )|\leq |f(x_{k+m+1})-\fhat(x_{k+m+1})|+|\fhat(x_{k+m+1})-\fhat(\xhat_{k,m+1} )|,
        \end{equation}
        and by \eqref{prop:3} and \eqref{eq:2}, we have
        \begin{equation}\nonumber
            \begin{split}
            &\left|f(x_{k+m+1})-\fhat(x_{k+m+1})\right|\\
            =& \,\left|f(x_{k+m+1})-f(x^*)-\frac{1}{2}(x_{k+m+1}-x^*)^{\T}H (x_{k+m+1}-x^*)\right|\\
            \leq& \, \frac{L}{6} \|x_{k+m+1}-x^*\|_2^3\\
             \leq& \, {\frac{L(m+1)^3c_{4,m}^3}{6} }\|x_k-x^*\|_2^3;
            \end{split}
        \end{equation}
        moreover,  from \eqref{eq3.1},  \eqref{prop:1}, \eqref{eq:C1} and \eqref{eq:2}, we get
        \begin{equation}\nonumber
            \begin{split}
                &\left|\fhat(x_{k+m+1})-\fhat(\xhat_{k,m+1})\right|\\
                 \leq & \,\frac{1}{2}\left\|\Ghat(x_{k+m+1})+\Ghat(\xhat_{k,m+1})\right\|_2\cdot \|x_{k+m+1}-\xhat_{k,m+1}\|_2\\
                 \leq & \, \frac{1}{2} \Lambda_1 \left(\|x_{k+m+1}-x^* \|_2 + \|\xhat_{k,m+1} - x^* \|_2\right) \cdot \|x_{k+m+1}-\xhat_{k,m+1}\|_2\\
                 \leq & {\frac{\Lambda_1 \left( 2(m+1)c_{4,m}+c_{1,m+1}\delta_{m+1} \right)}{2}} \|x_k - x^*\|_2^3,
            \end{split}
        \end{equation}
        { where we have used
\begin{align*}
\left\|\hat{x}_{k, m+1}-x^*\right\|_2 &\leq\left\|\hat{x}_{k, m+1}-x_{k+m+1}\right\|_2+\left\|x_{k+m+1}-x^*\right\|_2 \\
& \leqslant c_{1, m+1}\left\|x_k-x^*\right\|_2^2+(m+1) c_{4, m}\left\|x_k - x^{*}\right\|_2 \\
& \leq\left(c_{1, m+1} \delta_{m+1}+(m+1) c_{4, m}\right)\left\|x_k-x^*\right\|_2.
\end{align*}
         Hence taking $c_{2, m+1}=\frac{L}{6}(m+1)^3 c_{4, m}^3+\frac{\Lambda_1}{2} \left(2(m+1) c_{4, m}+c_{1, m+1} \delta_{m+1}\right) c_{1, m+1}$
        establishes  \eqref{eq:C2} for $m+1$.}

        For \eqref{C5}, it suffices to prove it for the long KGD step-size,  because for the short KGD step-size, we can use the  relation between the long and short KGD step-sizes for the quadratic model established for Theorem \ref{thm3.2}. Note that
        \begin{equation}\nonumber
            \begin{split}
                \|s_{k+m}-\shat_{k,m}\|_2= &\, \|\alpha_{k+m}g_{k+m} - \ahat_{k,m}\ghat_{k,m} \|_2 \\
                \leq &\,\alpha_{k+m}\|g_{k+m}- \ghat_{k,m} \|_2+|\alpha_{k+m}-\ahat_{k,m}|\cdot\|\ghat_{k,m}\|_2\\
                \leq & {\frac{c_{4, m}}{\Lambda_2}}\left\|x_k-x^*\right\|_2^2+{c_{5, m} \Lambda_1}\left\|x_k-x^*\right\|_2^2 \\
                = & {\left(\frac{c_{4, m}}{\Lambda_2}+c_{5, m} \Lambda_1\right)}\|x_k-x^*\|_2^2.
            \end{split}
        \end{equation}
        Hence
        \begin{align}\nonumber
            \begin{split}
                \left|s_{k+m}^{\T} s_{k+m} - \shat_{k,m}^{\T} \shat_{k,m} \right|
                =& \, \left|2s_{k+m}^{\T} (s_{k+m} -\shat_{k,m})-\|s_{k+m}-\shat_{k,m}\|_2^2 \right|\\
                \leq& 2 {c_{4, m}\left(\frac{c_{4, m}}{\Lambda_2}+c_{5, m} \Lambda_1\right)}\left\|x_k-x^*\right\|_2^3+{\left(\frac{c_{4, m}}{\Lambda_2}+c_{5, m} \Lambda_1\right)^2}\left\|x_k-x^*\right\|_2^4\\
                \leq& {\underbrace{\left[2 c_{4, m}\left(\frac{c_{4, m}}{\Lambda_2}+c_{5, m} \Lambda_1\right)+\delta_m\left(\frac{c_{4, m}}{\Lambda_2}+c_{5, m} \Lambda_1\right)^2\right]}_{=:h}\left\|x_k-x^*\right\|_2^3}\\
                   \, =&h \|x_k - x^*\|_2^3.{\label{eq:4} }
            \end{split}
        \end{align}
        By \eqref{eq:bndK1}, $\ahat_{k,m}\in [\frac{1}{\Lambda_1}, \frac{1}{\Lambda_2}]$,  and we know
           $ \|\shat_{k,m}\|_2 = \|\ahat_{k,m} \ghat_{k,m} \|_2 \geq \frac{\Lambda_2}{\Lambda_1} \|\xhat_{k,m}-x^*\|_2.$
        Furthermore, \eqref{condition} implies  
        \begin{equation}{\label{eq:5} }
            \|\shat_{k,m}\|_2 \geq \frac{\Lambda_2}{2 \Lambda_1} \|x_k-x^*\|_2.
        \end{equation}
        Thus we get 
        \begin{equation}{\label{eq:7}}
            \begin{split}
            a = \left| 1-\frac{s_{k+m}^{\T}s_{k+m} }{\shat_{k,m}^{\T}\shat_{k,m}  } \right| = \frac{|s_{k+m}^{\T} s_{k+m} - \shat_{k,m}^{\T} \shat_{k,m} | }{ |\shat_{k,m}^{\T} \shat_{k,m}| } \leq \frac{2\Lambda_1 h}{\Lambda_2}\|x_k-x^*\|_2.
            \end{split}
        \end{equation}
        Notice 
        \begin{align}\nonumber
                & |\left[ f(x_{k+m+1})-f(x_{k+m}) - g_{k+m}^{\T} (x_{k+m+1}-x_{k+m} )  \right] \\\nonumber
                &- \left[ \fhat (\xhat_{k,m+1})-\fhat(\xhat_{k,m}) - \ghat_{k,m}^{\T} (\xhat_{k,m+1}-\xhat_{k,m} )  \right]|\\\nonumber
                \leq & \, \left| f(x_{k+m+1})-\fhat (\xhat_{k,m+1})\right| +\left|f(x_{k+m})-\fhat(\xhat_{k,m})\right| \\\nonumber
                &+ \left|(g_{k+m}-\ghat_{k,m})^{\T} (x_{k+m+1}-x_{k+m} )\right| \\\nonumber
                &+ \left|\ghat_{k,m}^{\T}(x_{k+m+1}-\xhat_{k,m+1})\right|+\left|\ghat_{k,m}^{\T}(x_{k+m}-\xhat_{k,m} ) \right|\\{\label{eq:5'}}
                \leq &\, {\left(c_{2, m+1}+c_{2, m}+c_{3, m} c_{4, m}+\Lambda_1 c_{1, m+1}+\Lambda_1 c_{1, m}\right)}\|x_k-x^*\|_2^3,
        \end{align}
where the last inequality is due to \eqref{eq:C2}, \eqref{prop:1} and \eqref{eq:C1}. Also by \eqref{prop:1} and \eqref{prop:1'}, we have
        \begin{align}\nonumber
            &\left|f(x_{k+m+1})-f(x_{k+m}) - g_{k+m}^{\T}(x_{k+m+1}-x_{k+m})\right| \\\nonumber
            \geq &\, \frac{\Lambda_2}{2}  \|x_{k+m+1}-x_{k+m}\|_2^2=\, \frac{\Lambda_2}{2} \alpha_{k+m}^2 \|g_{k+m}\|_2^2\\\nonumber
            \geq&\,  \frac{\Lambda_2^3}{2 \Lambda_1^2} \|x_{k+m}-x^*\|_2^2.
        \end{align}
        To bound $\|x_{k+m}-x^*\|_2^2$ from below,  according to our assumption  $\|\xhat_{k,m}- x^* \|_2\ge \frac12\|x_k-x^*\|_2$ on \eqref{condition}  and   \eqref{C1}, we can deduce (by choosing a sufficiently small  $\delta$)
        \begin{equation}\nonumber
            \|x_{k+m} - x^*\|_2 \geq \left|\|\xhat_{k,m}- x^* \|_2-\|x_{k+m}-\xhat_{k,m}\|_2 \right| \geq \frac14 \|x_k-x^*\|_2,
        \end{equation}
      and therefore,
        \begin{equation}{\label{eq:6} }
            \left|f(x_{k+m+1})-f(x_{k+m}) - g_{k+m}^{\T}(x_{k+m+1}-x_{k+m})\right| \geq {\frac{\Lambda_2^3}{32\Lambda_1^2}} \|x_k - x^*\|_2^2.
        \end{equation}
        Similar to  \eqref{eq:7} and using \eqref{eq:5'} and \eqref{eq:6}, one can get  
        \begin{align}\nonumber
             b=&\left| 1- \frac{ \fhat (\xhat_{k,m+1})-\fhat(\xhat_{k,m}) - \ghat_{k+m}^{\T} (\xhat_{k,m+1}-\xhat_{k,m} ) }{ f(x_{k+m+1})-f(x_{k+m}) - g_{k+m}^{\T}(x_{k+m+1}-x_{k+m}) }  \right| \\ \label{eq:8}
            \leq & \, {\frac{32\Lambda_1^2}{\Lambda_2^3}\left(c_{2, m+1}+c_{2, m}+c_{3, m} c_{4, m}+\Lambda_1 c_{1, m+1}+\Lambda_1 c_{1, m}\right)}\|x_k - x^*\|_2. 
        \end{align}
       By noticing $$\ahat_{k,m}^2\|\Ghat(\xhat_{k,m} )\|_2^2=\shat_{k,m}^{\T}\shat_{k,m},~~\alpha_{k+m}^2\|G(x_{k+m} )\|_2^2=s_{k+m}^{\T}s_{k+m},$$
       and
        $$\ahat_{k,m}\|\Ghat(\xhat_{k,m} )\|_2^2=-\ghat_{k,m}^{\T} (\xhat_{k,m+1}-\xhat_{k,m} ),~\alpha_{k+m}\|G(x_{k+m} )\|_2^2=-g_{k+m}^{\T} (x_{k+m+1}-x_{k+m} ),$$ the inequalities \eqref{eq:7} and \eqref{eq:8} then can be used to prove \eqref{C5} as  
        \begin{align}\nonumber
                &|\ahat_{k,m+1}- \alpha_{k+m+1} | \\\nonumber
                =&\ahat_{k,m+1}\left|  1- \frac{1}{\ahat_{k,m+1}}\cdot \frac{\alpha_{k+m} }{2+ \frac{2(f(x_{k+m+1})-f(x_{k+m} ) ) }{\alpha_{k+m} \|G(x_{k+m} ) \|_2^2 } }\right|\\\nonumber
                =&\ahat_{k,m+1}\left|1 - \frac{1}{\ahat_{k,m+1}}\cdot\frac{\frac12 s_{k+m}^{\T}s_{k+m} }{f(x_{k+m+1})-f(x_{k+m})-g_{k+m}^{\T} (x_{k+m+1}-x_{k+m} )}\right|\\\nonumber
                =& {\ahat_{k,m+1}}   |a(1-b)+b |\\\nonumber
                \leq &\frac{1}{\Lambda_2} (|a|+|b|+|ab|)\\\nonumber
                \leq &{\frac{1}{\Lambda_2}\left[\left(\frac{2 \Lambda_1 h}{\Lambda_2}+\frac{32\Lambda_1^2}{ \Lambda_2^3}\left(c_{2, m+1}+c_{2, m}+c_{3, m} c_{4, m}+\Lambda_1 c_{1, m+1}+\Lambda_1 c_{1, m}\right)\right.\right.}\\ \nonumber
                +& {\left.\left.\frac{64\Lambda_1^3 h}{\Lambda_2^4}\left(c_{2, m+1}+c_{2, m}+c_{3, m} c_{4, m}+\Lambda_1 c_{1, m+1}+\Lambda_1 c_{1, m}\right) \delta_m^2\right)\right]}\|x_k - x^*\|_2\\
                =&:{\mu \left(\Lambda_1, \Lambda_2, c_{1, m}, c_{2, m}, c_{3, m}, c_{4, m}, c_{5, m}, \delta_m \right)\|x_k - x^*\|_2.}
        \end{align}
    This ensures  \eqref{eq:C5}  for any sufficiently small $\delta$. Consequently, by taking
    \begin{equation*}
          {  \left\{
    \begin{array}{l}
c_{1, m+1}=c_{1, m}+\frac{1}{\Lambda_2} c_{3, m}+\Lambda_1 c_{5, m}, \\
c_{2, m+1}=\frac{L}{6}(m+1)^3 c_{4, m}^3+\frac{1}{2} \Lambda_1 \left(2 (m+1) c_{4, m}+c_{1, m+1} \delta_{m+1}\right) c_{1, m+1}, \\
c_{3, m+1}=\frac{L}{2}(m+1) c_{4, m}+\Lambda_1 c_{1, m + 1},\\
c_{4, m+1}=\frac{\Lambda_1}{\Lambda_2}(m+1) c_{4, m}, \\
c_{5, m+1}=\mu \left(\Lambda_1, \Lambda_2, c_{1, m}, c_{2, m}, c_{3, m}, c_{4, m}, c_{5, m}, \delta_m \right),
\end{array}\right.}
    \end{equation*}
{with $\delta_0=\rho, \delta_{m+1}=\min \left\{\frac{\delta_m}{(m+1) c_{4, m}}, \delta_m\right\} $ and $\mu(\cdot)$ a positive function depending on the parameters in the bracket,} we have  proved \eqref{C1}-\eqref{C5} by induction.
    \end{proof}

With the help of the key Lemma \ref{lm1}, the proof for \cite[Theorem 2.3]{dahs:2006} can be  analogously applied to conclude the following theorem; due to the very similarity, we omit the detailed proof.  
    \begin{theorem}\label{localcon}
    Under {{\bf Assumptions} \ref{assum:A1}  and \ref{assum:A3}}, there exists a positive number $r<\rho$, such that for any $0<\delta < r$ and $x_0,x_1 \in B_{\delta} (x^*)$, the sequence $\{x_k\}$ generated by iteration \eqref{eq:lsKGD} converges to $x^*$ with at least an R-linear rate.
    \end{theorem}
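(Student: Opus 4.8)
The plan is to follow the asymptotic ``shadowing'' argument of \cite{dahs:2006}, using the frozen-Hessian quadratic sequence $\{\xhat_{k,j}\}$ from \eqref{eq:subKGDseq} as a reference trajectory and the shadowing estimate of Lemma \ref{lm1} to transfer its contraction onto the true iterates \eqref{eq:lsKGD}. The engine is that $\{\xhat_{k,j}\}$ is \emph{exactly} a long/short KGD (equivalently, BB) iteration on the strongly convex quadratic model $\fhat$, so by Theorems \ref{thm3.1} and \ref{thm3.2} it converges linearly to $x^*$; in particular the proposition guaranteeing an integer $N$ (depending only on $\Lambda_1,\Lambda_2$) gives $\|\xhat_{k,N}-x^*\|_2\le\tfrac12\|x_k-x^*\|_2$ for every base point $x_k$. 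The goal is to convert one halving of the reference trajectory into a uniform contraction of the true trajectory over at most $N$ steps, and then to iterate this block by block to obtain R-linear decay. Since near $x^*$ the Regime-0 safeguard \eqref{eq:ruleKGD0} is automatically satisfied by \eqref{prop:1'}, Algorithm \ref{alg:global} indeed executes the pure iteration \eqref{eq:lsKGD} analyzed here.

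Concretely, first I would fix $x_k\in B_\delta(x^*)$ and let $m\le N$ be the smallest index at which the reference sequence halves, so that $\|\xhat_{k,j}-x^*\|_2>\tfrac12\|x_k-x^*\|_2$ for all $j<m$. By construction the hypothesis \eqref{condition} of Lemma \ref{lm1} holds on $[0,m-1]$, so the lemma applies and yields the quadratic shadowing bound together with $x_{k+m}\in B_\rho(x^*)$. A triangle inequality then gives
\[
\|x_{k+m}-x^*\|_2 \le \|x_{k+m}-\xhat_{k,m}\|_2 + \|\xhat_{k,m}-x^*\|_2 \le c_1\|x_k-x^*\|_2^2 + \tfrac12\|x_k-x^*\|_2 .
\]
Choosing the radius $r$ small enough that $c_1 r\le\tfrac14$ forces $\|x_{k+m}-x^*\|_2\le\tfrac34\|x_k-x^*\|_2$, a fixed contraction attained in at most $N$ steps.

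Next I would iterate the block argument, taking $x_{k+m}$ as the new base point and carrying over the current step-size $\alpha_{k+m}$, which is precisely the initialization $\ahat_{k+m,0}$ of the next reference sequence in \eqref{eq:subKGDseq}, so re-applying Lemma \ref{lm1} with base index $k\mapsto k+m$ is legitimate. The within-block growth estimate from the proof of Lemma \ref{lm1}, namely $\|x_{k+j}-x^*\|_2\le(m+1)c_4\|x_k-x^*\|_2\le(N+1)c_4\|x_k-x^*\|_2$ for $j\le m$, keeps every intermediate iterate inside $B_\rho(x^*)$ once $r$ is chosen so that $(N+1)c_4\,r<\rho$, preserving the hypotheses from block to block. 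Because the base-point distances contract by $\tfrac34$ per block and each block has length at most $N$, expressing $k$ through the number of completed blocks yields $\|x_k-x^*\|_2\le C\,\theta^{\,k}$ with $\theta=(3/4)^{1/N}<1$, which is exactly the asserted R-linear rate.

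The main obstacle, and the reason the argument is delicate rather than routine, is the self-consistency of the block recursion: one must verify that the constants $c_1,\dots,c_5$ and the radius thresholds of Lemma \ref{lm1} can be taken uniform across \emph{all} blocks, and that the step-size inherited at each restart remains in the safe range $[\Lambda_1^{-1},\Lambda_2^{-1}]$ supplied by \eqref{eq:bndK1}. Since the endpoint distances shrink geometrically while the intra-block excursions are controlled by \eqref{prop:1}, fixing a single $\delta<r$ makes the same constants serve every block, so the obstacle is surmountable and the remaining bookkeeping parallels \cite[Theorem 2.3]{dahs:2006}.
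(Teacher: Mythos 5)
Your proposal is correct and follows essentially the same route as the paper: the paper proves the shadowing Lemma \ref{lm1} in detail and then explicitly defers to the block-contraction argument of \cite[Theorem 2.3]{dahs:2006}, which is exactly what you reconstruct (halving of the frozen-Hessian reference sequence within $N$ steps, the triangle inequality $\|x_{k+m}-x^*\|_2\le c_1\|x_k-x^*\|_2^2+\tfrac12\|x_k-x^*\|_2\le\tfrac34\|x_k-x^*\|_2$, and block-by-block restarting with the inherited step-size $\alpha_{k+m}=\ahat_{k+m,0}$ kept in $[\Lambda_1^{-1},\Lambda_2^{-1}]$ by \eqref{eq:bndK1}). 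Your parenthetical claim that the Regime-0 safeguard \eqref{eq:ruleKGD0} is automatically satisfied near $x^*$ is not needed (the theorem concerns the pure iteration \eqref{eq:lsKGD}) and is not fully justified as stated, but it does not affect the argument.
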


\section{Numerical experiments}{\label{sec:numer} }
To evaluate the performance of the KGD step-size strategies and our adaptive version {\tt KGDadp}, we implement  Algorithm \ref{alg:global} on MATLAB R2015b and run it on the Ubuntu 20.04 system on a 64-bit PC with an Intel Core I5 8550U CPU (3.0GHz) and 8GB of RAM. As the stopping criterion that is used in stabilized version ({\tt BBstab}) of BB   proposed in \cite{budh:2019}, we terminate the iteration whenever    
\begin{equation}{\label{end}}
         \|g_k\|_2 \leq 10^{-6} \cdot \|g_0\|_2.
    \end{equation}
For the parameters involved in Algorithm \ref{alg:global}, we set $k_{\max}=10^5, ~\alpha_0 = \frac{1}{||g_0||_2},~\eta = 10^{-4}$ and $M = 20.$  
    
{    
Two parts of test problems are  used. The first part consists general unconstrained minimization problems  from CUTEst collection\footnote{Refer to \url{https://github.com/ralna/CUTEst/wiki/} for the CUTEst environment  and  \url{https://www.cuter.rl.ac.uk/Problems/mastsif.shtml} for the CUTEst test problems.} \cite{gort:2015}; in particular, we choose unconstrained optimization problems with dimension less than or equal to 15000; this then results in a set of  $212$ problems (see Table \ref{tab:cutest} in Appendix).  Note that for each problem, the initial point  $x_0$ is also provided in the CUTEst collection. The second part of test problems are the convex minimizations from the practical logistic regression \cite{mami:2020}; both synthetic and real datasets  \cite{mami:2020} are utilized as test cases, and the performance of two adaptive gradient methods recently proposed for the convex minimization \cite{mami:2020} are also reported for comparison.}
    
    \subsection{Performance of {\tt KGDadp} with different step-size strategies}\label{subsec:compfoursteps}
    We first evaluate  KDG step-size strategies and  BB steps in the framework of  Algorithm \ref{alg:global}. In particular, we compare four versions of {\tt KGDadp}, namely, 
    \begin{equation}\label{eq:fourmethods}
      {\tt KGDadp}(\Kzero,\Kone),\quad {\tt KGDadp}(\Kzero,\Kone^{\rm s}),\quad  {\tt KGDadp}(\Kzero,{\rm BB1}), \quad {\tt KGDadp}(\Kzero,{\rm BB2}).
      \end{equation}   
    To illustrate the efficiency of each tested method, we use the performance profile of Dolan and Mor{\'e} \cite{domo:2002} to demonstrate the results. In Figure \ref{Compare1}, the number of out-loop iterations $k$,  the number of computing gradients as well as the consumed CPU time (in second) are presented. To understand Figure \ref{Compare1}, for example,  let $ {\rm iter}_{p,s} $ denote the number of iterations that a solver $s$ requires in solving the problem $ p $, and define the ratio $\frac {{\rm iter}_{p,s} }{{\rm iter}^*_p}$,
where ${\rm iter}^*_p $ represents the smallest number of iterations among the four solvers for solving the problem $p$. 
Let ${\cal P}$ be our set of $n_p=212$ test problems; then the distribution function  
\begin{equation*}
	r_{\rm iter}(\tau) = \frac{{\rm size}\left\{p\in {\cal P}: \log_2\left(\frac {{\rm iter}_{p,s} }{{\rm iter}^*_p}\right) \le \tau\right\}}{n_p}, \quad \tau \ge 0,
\end{equation*}
 is a performance metric for the solver $s$. The performance profiles of Figure \ref{Compare1}   plot the distribution functions $r_{\rm iter}(\tau)$, $r_{\rm grad}(\tau) $ and $r_{\rm cpu}(\tau) $ for each solver. 
 
 Besides the  performance profiles, in Table \ref{res1}, we also report the number of cases that an algorithm successfully computes a solution to meet the condition \eqref{end}. One can see from  Figure \ref{Compare1} and Table \ref{res1} that   the {short KGD step-size}  has the best performance on this set of test problems.
    \begin{figure}[htbp]
    \centering
    \hfill
    \subfloat{\includegraphics[width = 1.93in]{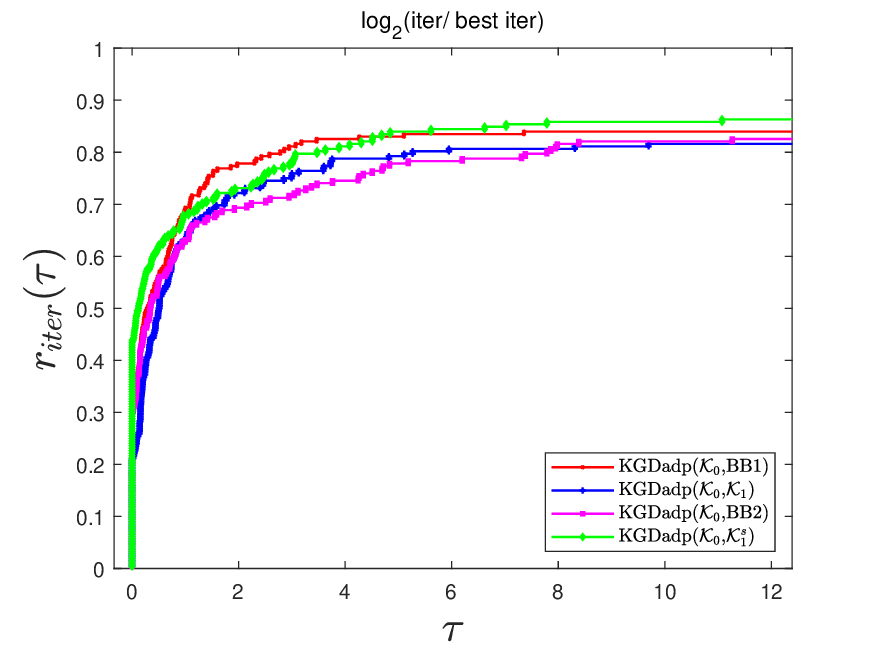} \label{iter1}}
    \hfill
    \subfloat{\includegraphics[width = 1.93in]{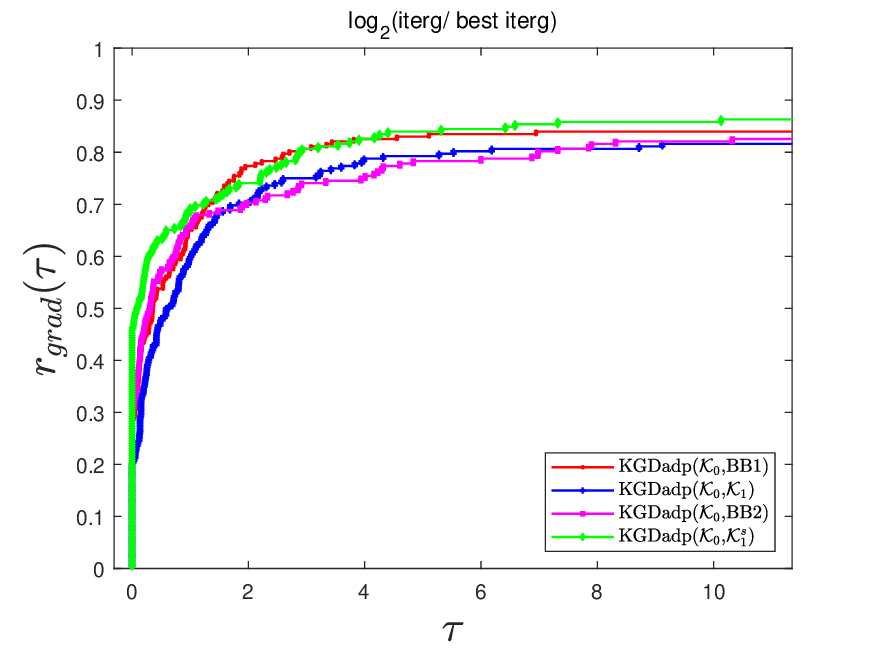} \label{iterg1} }
    \hfill
    \subfloat{\includegraphics[width = 1.93in]{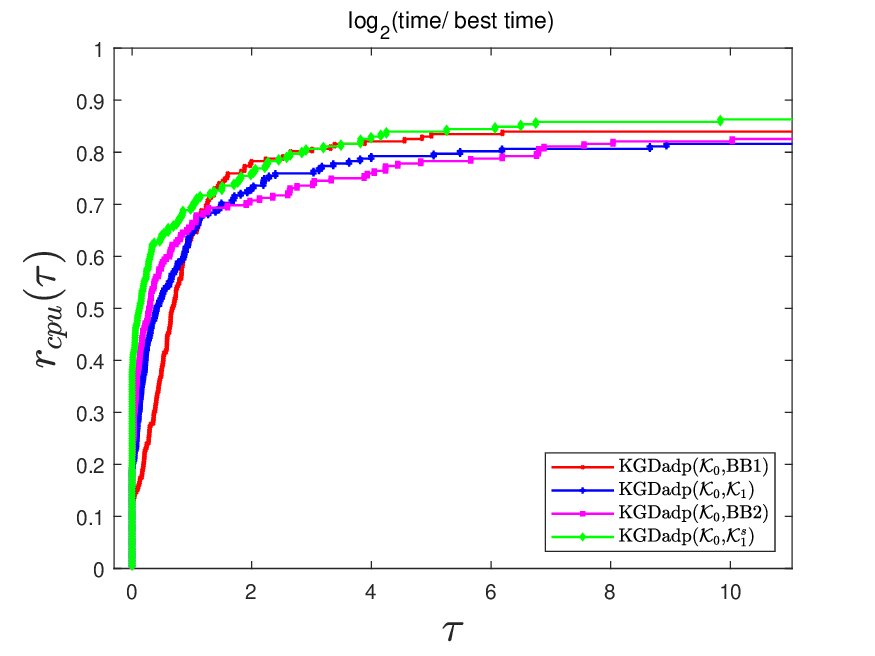} \label{CPUtime1} }
    \caption{\em Performance profiles of four implementations \eqref{eq:fourmethods} of  {\tt KGDadp} on 212 unconstrained problems from CUTEst collections: the number of iterations (left),  the number of calculations of gradient (middle) and CPU time (right).}
    \label{Compare1}
    \end{figure}

\begin{table}[htbp]
  \centering
  \setlength\tabcolsep{3pt}  
	\renewcommand\arraystretch{.8} 
  \caption{Number of successful cases}
    \begin{tabular}{lcccc}
    \toprule
    \small
    Method & \multicolumn{1}{l}{ ${\tt KGDadp}(\Kzero,{\rm BB1})$} & \multicolumn{1}{l}{ ${\tt KGDadp}(\Kzero,\Kone)$} & \multicolumn{1}{l}{ ${\tt KGDadp}(\Kzero,{\rm BB2})$} & \multicolumn{1}{l}{ ${\tt KGDadp}(\Kzero,\Kone^{\rm s})$} \\
    \midrule
    Successful cases & 178   & 173   & 175   & 183 \\
    \bottomrule
    \end{tabular}%
  \label{res1}
\end{table}

\subsection{Performance of the long BB step in different frameworks}\label{subsec:BB1comp}
The long BB step-size \eqref{BBl} is probably the most widely-used in the line-search gradient methods, and there are also some globalization strategies to handle the divergence for the general minimization. For this purpose, a recent work  in  \cite{budh:2019}  proposes an effective stabilized version ({\tt BBstab}) of BB steps. Precisely, instead of using the pure long BB step-size \eqref{BBl},  the following  modified 
     \begin{equation}\nonumber
         \alpha_{k+1} = \min \left\{ \alpha_{k+1}^{\text{BB1}}, \frac{\Delta}{\|g_k\|_2} \right\},
     \end{equation}
     is used,  where $\Delta > 0$ is a parameter, set as \cite{budh:2019} 
     \begin{equation}\nonumber
         \Delta = c \cdot \min \left\{ \|s_1\|_2, \|s_2\|_2, \|s_3\|_2\right\},
     \end{equation}
      $s_k = x_k - x_{k-1}$ and $c > 0$ is a positive constant. Considering that our {\tt KGDadp} is also a globalization strategy for BB and KGD,  in this subsection, we shall evaluate the effectiveness of {\tt KGDadp} on the long BB step. 
     
     To this end, we compare the performance of ${\tt KGDadp}(\Kzero,{\rm BB1})$ to {\tt BB1stab} with two parameters $c = 1$ and $c = 0.5$. In  Figure \ref{Compare2}, we give the performance profiles, and also report the number of  successful cases in  Table \ref{res2}. The method labelled as `Pure BB1' refers to the iteration $x_{k+1} = x_k - \alpha_{k}^{\text{BB1}} \cdot g_k$. One can see that  ${\tt KGDadp}(\Kzero,{\rm BB1})$ performs better than  {\tt BB1stab} on our test problems.
     
    \begin{figure}[htbp!]
    \centering
      \setlength\tabcolsep{3pt}  
	\renewcommand\arraystretch{.8} 
    \hfill
    \subfloat{\includegraphics[width = 1.9in]{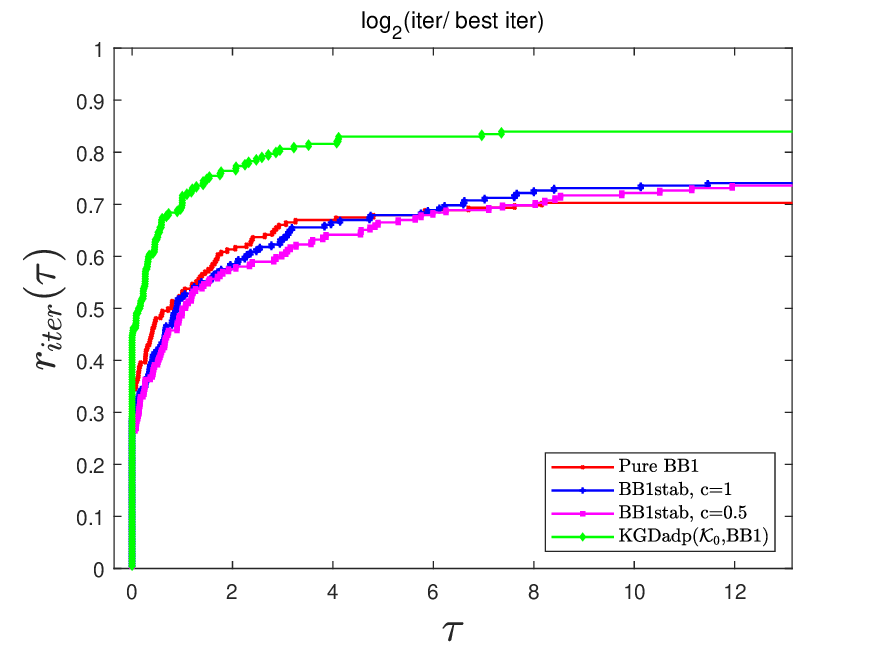} \label{iter2}}
    \hfill
    \subfloat{\includegraphics[width = 1.9in]{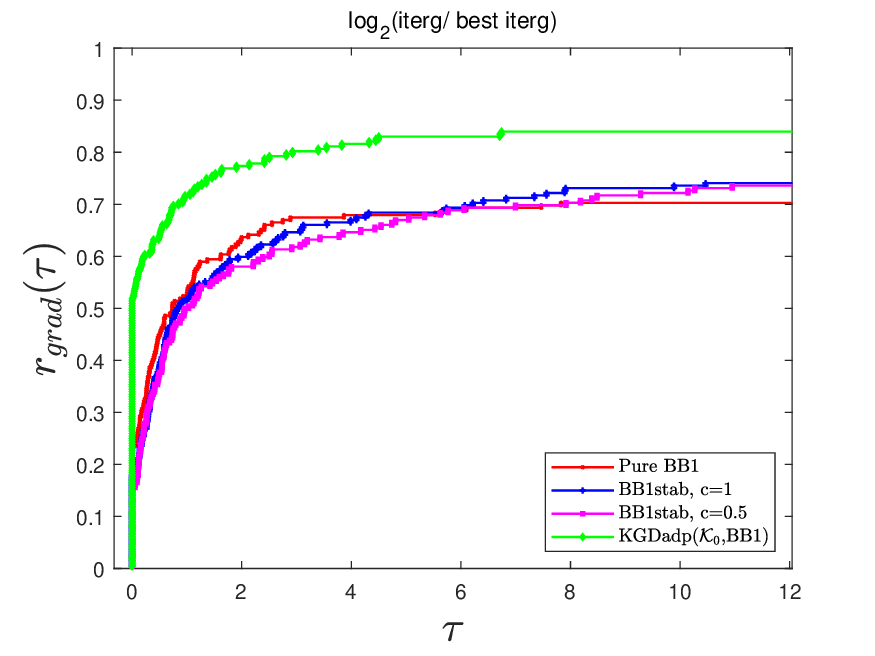} \label{iterg2} }
    \hfill
    \subfloat{\includegraphics[width = 1.9in]{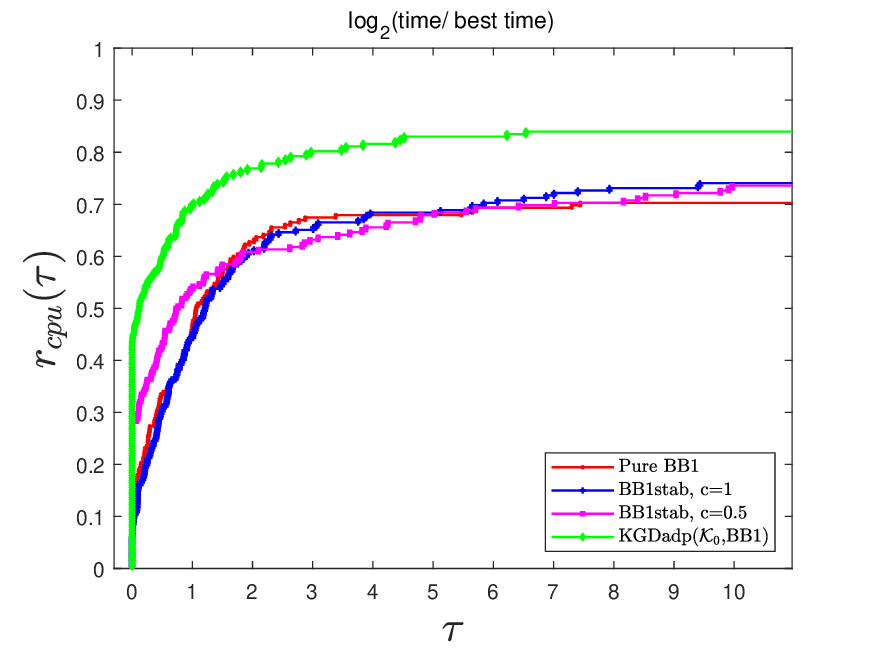} \label{CPUtime2} }
    \caption{\em Performance profiles of {\rm Pure BB1}, {\tt BB1stab} and ${\tt KGDadp}(\Kzero,{\rm BB1})$. }
    \label{Compare2}
    \end{figure}
    \begin{table}[htbp]
    \centering
    \caption{Number of successful cases}
    \begin{tabular}{lcccc}
    \toprule
    Method & \multicolumn{1}{l}{Pure BB1} & \multicolumn{1}{l}{{\tt BB1stab}($c=1$)} & \multicolumn{1}{l}{{\tt BB1stab}($c=0.5$)} & \multicolumn{1}{l}{${\tt KGDadp}(\Kzero,{\rm BB1})$ }\\
    \midrule
    Successful cases & 149   & 157   & 156   & 178 \\
    \bottomrule
    \end{tabular}%
    \label{res2}
    \end{table}

    {
   \subsection{Performance of {\tt KGDadp} on the logistic regression problem}
   As the final part of numerical test, we apply the proposed methods to solve an important practical convex problem, the logistic regression:
   \begin{equation}\label{eq:logistic}
   \min_{x\in {\mathbb R}^{n}} f(x)=-\frac{1}{m}\sum_{i=1}^m \left(y_i \log (s(a_i^{\T} x)) + (1 - y_i) \log (1 - s(a_i^{\T} x))\right) + \frac{\gamma}{2}\|x\|_2^2,
   \end{equation}
where  $a_i\in {\mathbb R}^{n}$, $y_i\in \{0, 1\}$, $s(z)=\frac{1}{1+\exp(-z)}$ is the sigmoid function. The objective function $f(x)$ in \eqref{eq:logistic} is smooth   with smoothness constant $L=\frac{1}{4}\lambda_{n}(A^{\T} A) + \gamma$ \cite{mami:2020} and its gradient is   $\nabla f(x) = \frac{1}{m}\sum_{i=1}^m a_i(s(a_i^{\T} x)-y_i) + \gamma x$, where $A=[a_1^{\T},\dots,a_m^{\T}]^{\T}$. 
   For comparison, we also employ   two  adaptive gradient-based methods, namely {\tt AdGD} and {\tt AdGD-accel},  proposed recently  for convex problems   in \cite{mami:2020}. {\tt AdGD} is a basic and simple adaptive gradient-based  method while {\tt AdGD-accel} is an acceleration version. As both methods\footnote{\url{https://github.com/ymalitsky/adaptive_GD}} are coded in  Python, we provide the Python version of our methods  ${\tt KGDadp}(\Kzero,{\rm BB1})$ and ${\tt KGDadp}(\Kzero,\Kone^{\rm s})$. For a fair comparison, the default settings of  {\tt AdGD}  and {\tt AdGD-accel} given in \cite{mami:2020} are used except for revising the stopping rules with the same one \eqref{end}. 
    }

{
To evaluate the performance of the methods  ${\tt KGDadp}(\Kzero,{\rm BB1})$,  ${\tt KGDadp}(\Kzero,\Kone^{\rm s})$, {\tt AdGD}  and {\tt AdGD-accel}, we use both the synthetic data and real data for testing. In particular, the synthetic data consist of $100$ randomly generated matrices $A\in \bbR^{m\times 500}$ as well as  the corresponding labels $\{y_i\}_{i=1}^m$ for each case with $m\in \{1000,2000,\dots,100000\}$.   The performance profile of Dolan and Mor{\'e} \cite{domo:2002}  used in Figures \ref{Compare1} and \ref{Compare2} is similarly applied to demonstrate the results for the   synthetic data; particularly, Figure \ref{Compare3} reports the number of iterations  and CPU time. The results clearly show that    ${\tt KGDadp}(\Kzero,\Kone^{\rm s})$ and ${\tt KGDadp}(\Kzero,{\rm BB1})$ are more efficient than {\tt AdGD}  and {\tt AdGD-accel}. 
    \begin{figure}[htbp]
    \centering
    \hskip2mm 
    \subfloat{\includegraphics[width = 2.55 in]{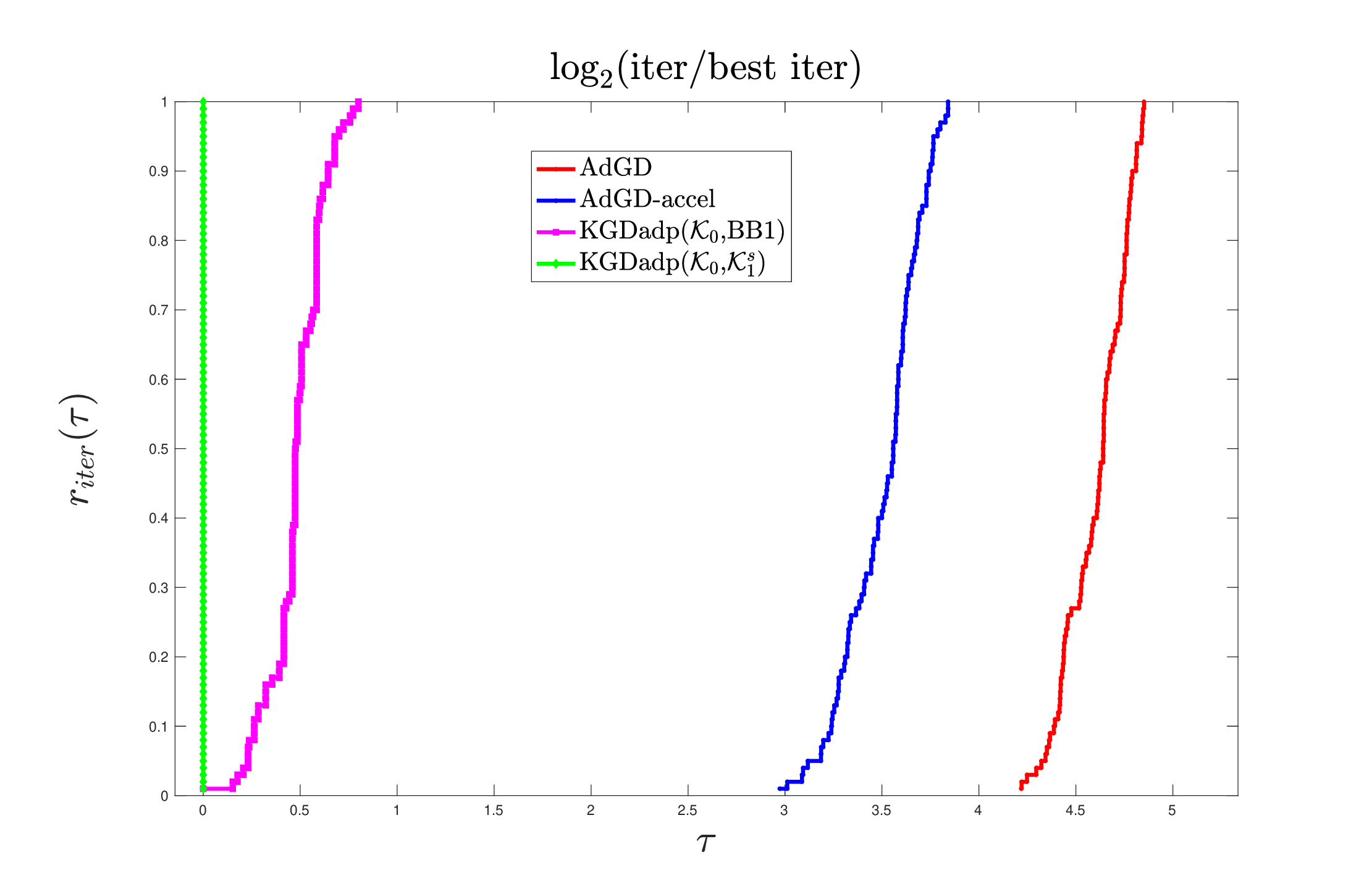} \label{logis_iter}}
    \hfill
    \subfloat{\includegraphics[width = 2.55 in]{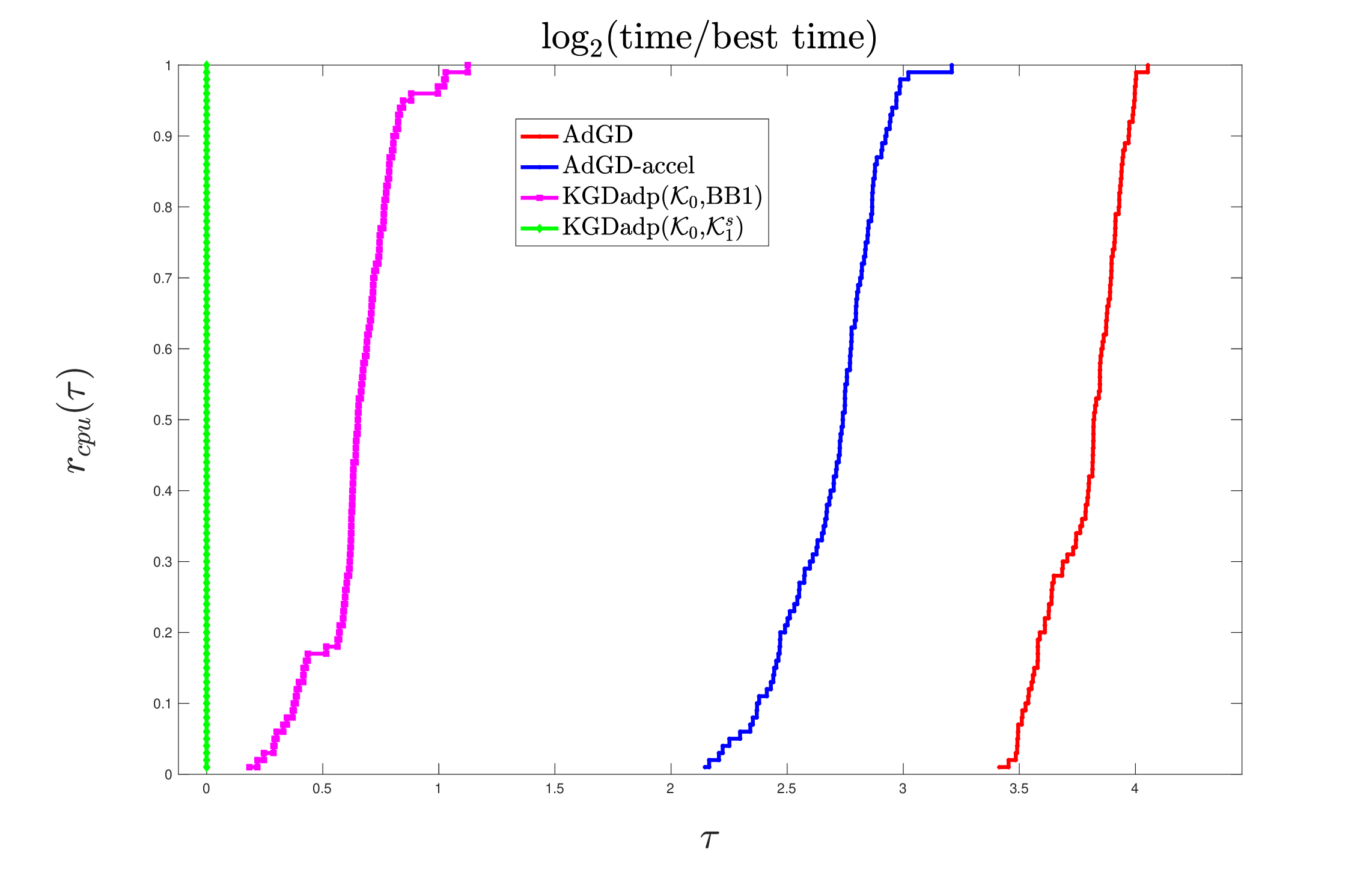} \label{logis_cpu} }
    \caption{\em  {Performance profiles of four methods   ${\tt KGDadp}(\Kzero,{\rm BB1})$,  ${\tt KGDadp}(\Kzero,\Kone^{\rm s})$, {\tt AdGD}  and {\tt AdGD-accel} on  the logistic regression \eqref{eq:logistic} with 100 randomly generated cases: the number of iterations (left)  and CPU time (right).}}
    \label{Compare3}
    \end{figure} 
}

{
For the real data, we follow \cite{mami:2020} and use  `mushrooms' ($m=8124, n=112$), `covtype'  ($m=581012, n=54$) and `wa8' ($m=49749, n=300$) datasets from the [LIBSVM] (\url{https://www.csie.ntu.edu.tw/~cjlin/libsvm/}) library to run the experiments.   The relative   $\|g_k\|_2/\|g_0\|_2$ of the gradient $g_k$ of each method is depicted versus the iteration number $k$ in Figure \ref{Compare4}, from which the faster convergence of ${\tt KGDadp}(\Kzero,{\rm BB1})$ and ${\tt KGDadp}(\Kzero,\Kone^{\rm s})$ can be clearly observed. 
\begin{figure}[htbp]
    \centering
    \hfill
    \subfloat{\includegraphics[width = 1.932in]{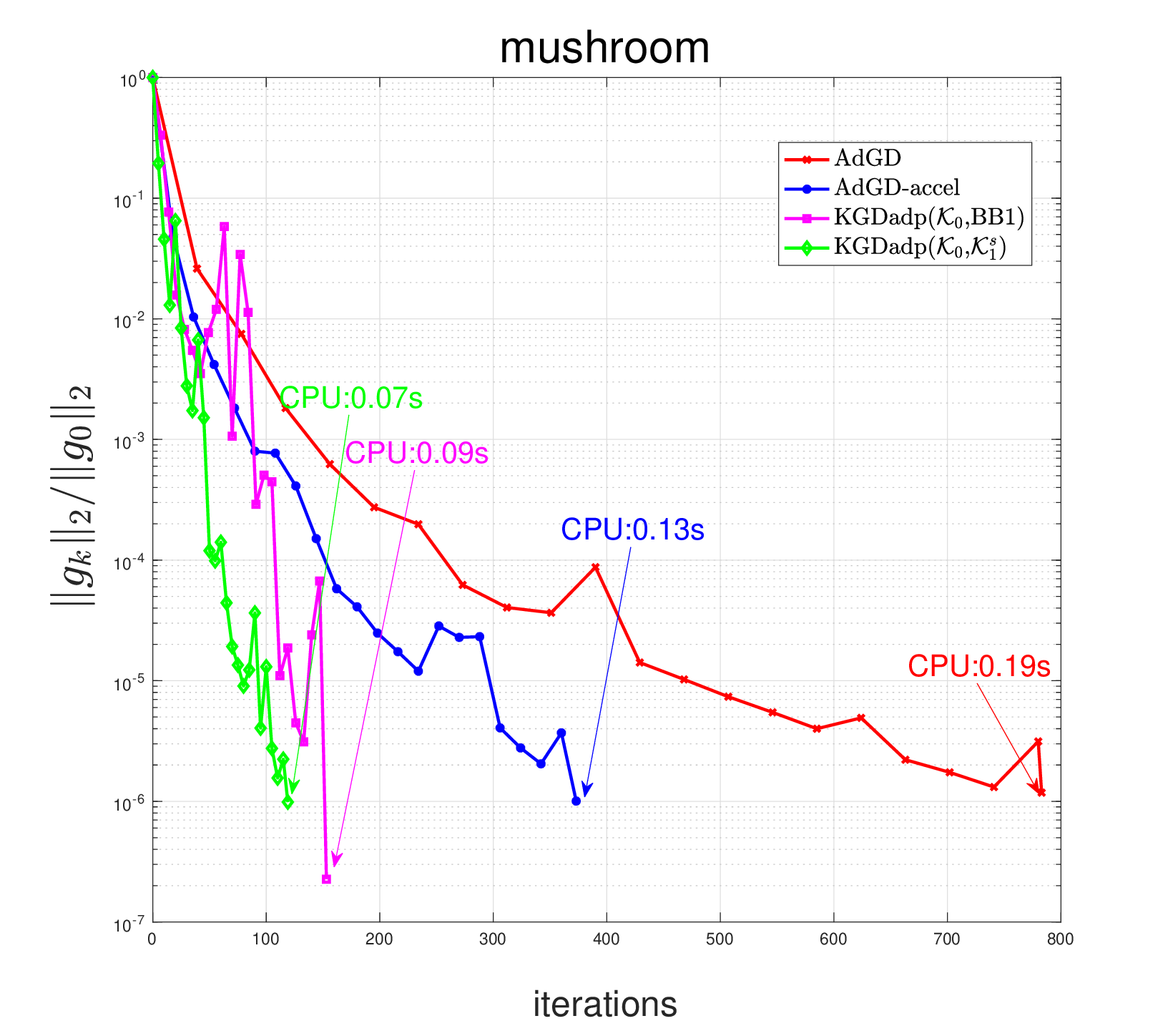} \label{mushroom}}
    \hfill
    \subfloat{\includegraphics[width = 1.932in]{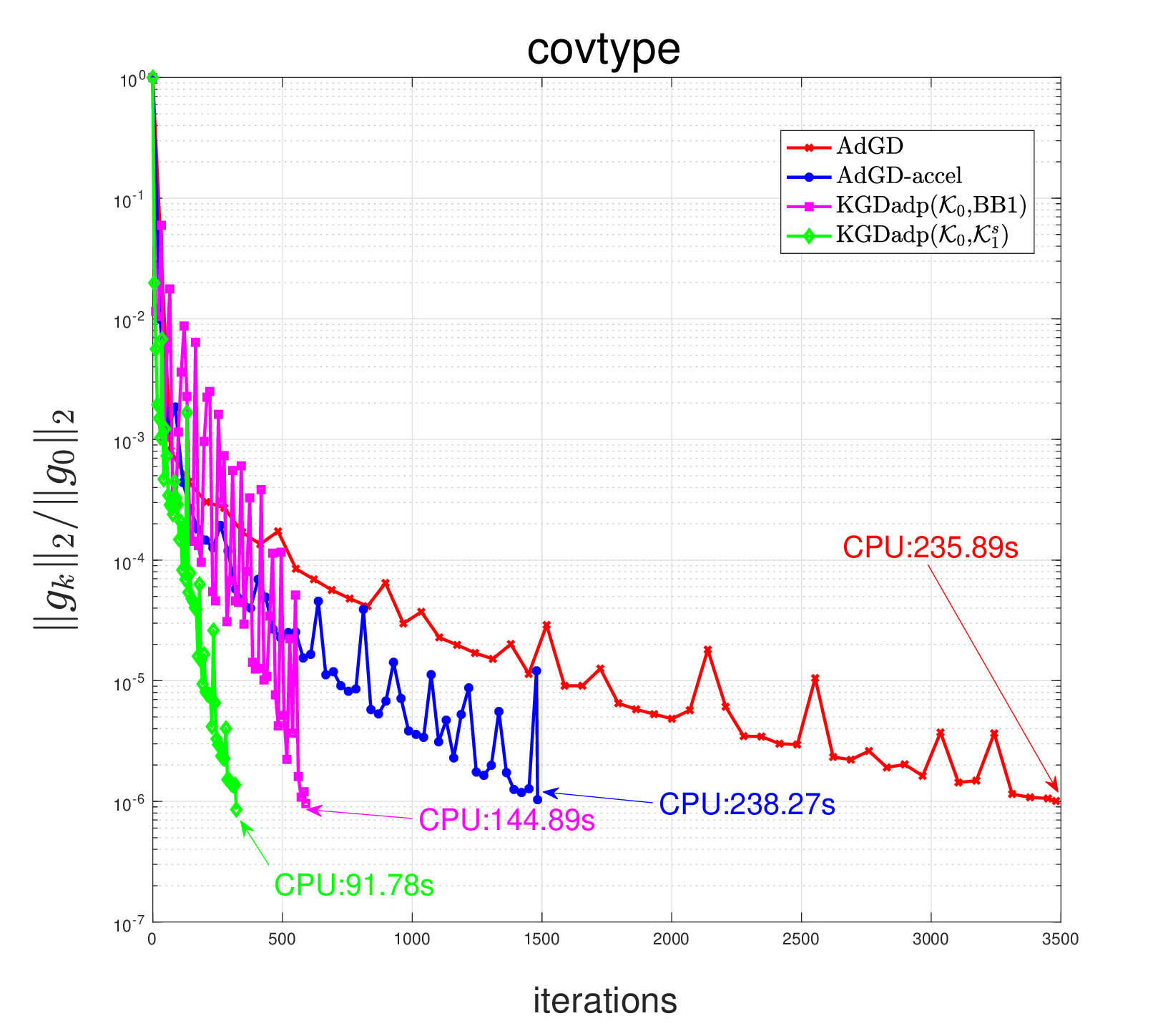} \label{covtype} }
    \hfill
    \subfloat{\includegraphics[width = 1.932in]{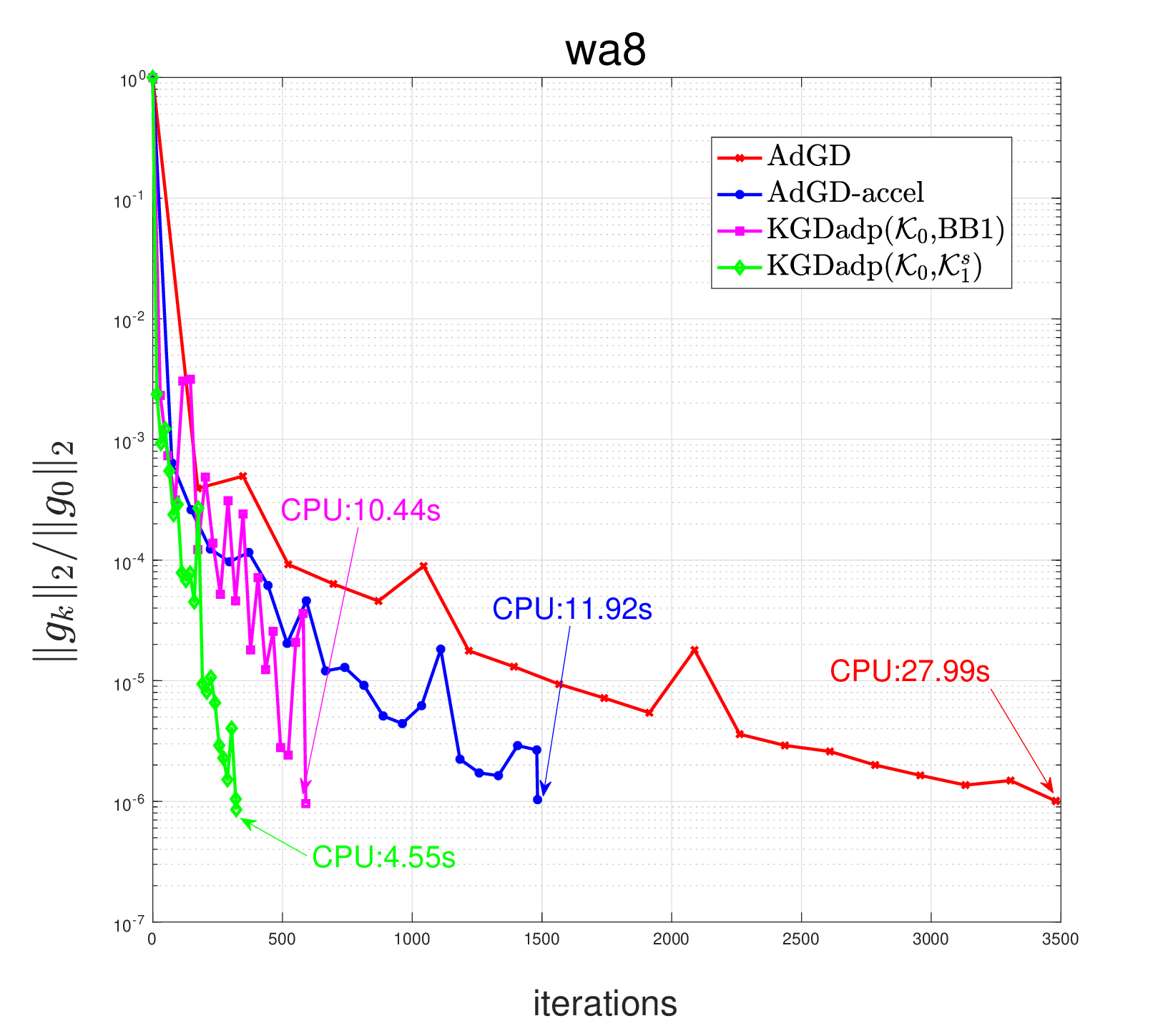} \label{wa8} }
    \caption{\em { The relative $\|g_k\|_2/\|g_0\|_2$ of the gradient $g_k$  versus the iteration number $k$ for ${\tt KGDadp}(\Kzero,{\rm BB1})$,  ${\tt KGDadp}(\Kzero,\Kone^{\rm s})$, {\tt AdGD}  and {\tt AdGD-accel} on  the logistic regression \eqref{eq:logistic} with  the `mushrooms', `covtype' and `wa8' datasets.}}
    \label{Compare4}
    \end{figure} 
}
    
\section{Concluding remarks}{\label{sec:conclu} }
    In this paper, we introduced  William Kahan's automatic step-size control strategies \cite{kahn:2019b,kahn:2019a} on the line-search gradient methods. One of our motivations is a connection given in  \cite{gjmx:2023} which establishes the equivalence between the long KGD step-size and the long BB step in the quadratic model. In order to effectively produce the step-size for the current iteration along  the negative gradient direction, both BB and KGD step-sizes attempt to use the information that is carried in the previous iteration (including the value of the objective function, its gradient as well as the step-size). Although the ways to deduce the corresponding step-size formulae are different, it is interesting to note that, at a current iteration, both essentially achieve the same step-size for the strongly  convex quadratic model, provided that they share the same previous iterate. Different from the formulae  of BB steps, the KGD step-sizes iterate directly on the previous step-size, and in this sense, KGD provides an alternative way for the BB step. 
        
     Based on the previous discussion  in \cite{gjmx:2023} for Kahan's  automatic step-size control \cite{kahn:2019b,kahn:2019a}, we have made a further step in this paper. By the connection of the long BB step with the KGD step-size for Regime 1, we  derived the short KGD step-size formulation, which reduces to the short BB step in the quadratic case; moreover, through a change of variables, we have established a correspondence between the sequences from the long and short BB steps. Thus, the recent convergence result  \cite{lisu:2021} on the long BB step can be easily applied to the short BB step (and hence the short KGD step-size) to show the R-linear convergence with a rate at least $1-\frac{1}{\kappa}$. In order to dynamically and effectively choose suitable  KGD steps for Regime 0 and Regime 1 during iteration, we further constructed an adaptive framework {\tt KGDadp} in Algorithm \ref{alg:global}, and proved its global convergence. Locally, we showed that both the long and short  KGD steps converge R-linearly. Our preliminary numerical experiments on a set of CUTEst collection \cite{gort:2015} {as well as the convex logistic regression problems \cite{mami:2020}} indicate that KGD step-sizes deserve further investigations.
                 

\section*{Acknowledgements}
 {The authors would like to thank the anonymous referees for their careful reading, useful comments and suggestions to improve the presentation of the paper.} Also, they  thank Prof. Na Huang at China Agricultural University for providing the code of the stabilized Barzilai-Borwein method ({\tt BB1stab}) \cite{budh:2019}  for our numerical testing, and  thank     
    Prof. Ren-Cang Li at the University of Texas at Arlington for sharing the re-typeset latex form of \cite{kahn:2019b} and also helpful discussions on  Kahan's automatic step-size control strategies.
    
%
%
%
%
%
%
%
%
%
%
%
%
%
%

 
 \def\noopsort#1{}\def\l{\char32l}\def\v#1{{\accent20 #1}}
  \let\^^_=\v\def\hbk{hardback}\def\pbk{paperback}
\providecommand{\href}[2]{#2}
\providecommand{\arxiv}[1]{\href{http://arxiv.org/abs/#1}{arXiv:#1}}
\providecommand{\url}[1]{\texttt{#1}}
\providecommand{\urlprefix}{URL }

%

 \newpage
\section*{Appendix}
\begin{table}[h!!!]
	\caption{\label{tab:cutest} Information on test problems selected from the \texttt{CUTEst} collection }
	\centering
		\setlength\tabcolsep{3pt}  
	\renewcommand\arraystretch{.71} 
	\tabcolsep 0.13in\small
	\begin{tabular}{|c|c||c|c||c|c||c|c|}%
	\hline
      \small Problem & \small $n$ & \small Problem &  \small $n$ & \small Problem & \small  $n$&\small Problem &  \small$n$\\
	\hline
 \tiny AKIVA     	&\tiny	2	&\tiny	DIXMAANJ  	&\tiny	3000	&\tiny	HILBERTB  	&\tiny	10	&\tiny	PALMER7C  	&\tiny	8	\\ \hline \tiny
ALLINITU  	&\tiny	4	&\tiny	DIXMAANK  	&\tiny	3000	&\tiny	HIMMELBB  	&\tiny	2	&\tiny	PALMER8C  	&\tiny	8	\\ \hline \tiny
ARGLINA   	&\tiny	200	&\tiny	DIXMAANL  	&\tiny	3000	&\tiny	HIMMELBF  	&\tiny	4	&\tiny	PARKCH    	&\tiny	15	\\ \hline \tiny
ARGLINB   	&\tiny	200	&\tiny	DIXMAANM  	&\tiny	3000	&\tiny	HIMMELBG  	&\tiny	2	&\tiny	PENALTY1  	&\tiny	1000	\\ \hline \tiny
ARGLINC   	&\tiny	200	&\tiny	DIXMAANN  	&\tiny	3000	&\tiny	HIMMELBH  	&\tiny	2	&\tiny	PENALTY2  	&\tiny	200	\\ \hline \tiny
ARGTRIGLS 	&\tiny	10	&\tiny	DIXMAANO  	&\tiny	3000	&\tiny	HUMPS     	&\tiny	2	&\tiny	PENALTY3  	&\tiny	200	\\ \hline \tiny
ARWHEAD   	&\tiny	5000	&\tiny	DIXMAANP  	&\tiny	3000	&\tiny	HYDC20LS  	&\tiny	99	&\tiny	POWELLBSLS	&\tiny	2	\\ \hline \tiny
BARD      	&\tiny	3	&\tiny	DIXON3DQ  	&\tiny	10000	&\tiny	INDEF     	&\tiny	5000	&\tiny	POWELLSG  	&\tiny	5000	\\ \hline \tiny
BDQRTIC   	&\tiny	5000	&\tiny	DJTL      	&\tiny	2	&\tiny	INDEFM    	&\tiny	100000	&\tiny	POWER     	&\tiny	10000	\\ \hline \tiny
BEALE     	&\tiny	2	&\tiny	DMN15103LS	&\tiny	99	&\tiny	INTEQNELS 	&\tiny	12	&\tiny	QUARTC    	&\tiny	5000	\\ \hline \tiny
BENNETT5LS	&\tiny	3	&\tiny	DMN15332LS	&\tiny	66	&\tiny	JENSMP    	&\tiny	2	&\tiny	RAT42LS   	&\tiny	3	\\ \hline \tiny
BIGGS6    	&\tiny	6	&\tiny	DMN15333LS	&\tiny	99	&\tiny	JIMACK    	&\tiny	3549	&\tiny	RAT43LS   	&\tiny	4	\\ \hline \tiny
BOX       	&\tiny	10000	&\tiny	DMN37142LS	&\tiny	66	&\tiny	KIRBY2LS  	&\tiny	5	&\tiny	ROSENBR   	&\tiny	2	\\ \hline \tiny
BOX3      	&\tiny	3	&\tiny	DMN37143LS	&\tiny	99	&\tiny	KOWOSB    	&\tiny	4	&\tiny	ROSZMAN1LS	&\tiny	4	\\ \hline \tiny
BOXBODLS  	&\tiny	2	&\tiny	DQDRTIC   	&\tiny	5000	&\tiny	LANCZOS1LS	&\tiny	6	&\tiny	S308      	&\tiny	2	\\ \hline \tiny
BOXPOWER  	&\tiny	20000	&\tiny	DQRTIC    	&\tiny	5000	&\tiny	LANCZOS2LS	&\tiny	6	&\tiny	SBRYBND   	&\tiny	5000	\\ \hline \tiny
BRKMCC    	&\tiny	2	&\tiny	ECKERLE4LS	&\tiny	3	&\tiny	LANCZOS3LS	&\tiny	6	&\tiny	SCHMVETT  	&\tiny	5000	\\ \hline \tiny
BROWNAL   	&\tiny	200	&\tiny	EDENSCH   	&\tiny	2000	&\tiny	LIARWHD   	&\tiny	5000	&\tiny	SCOSINE   	&\tiny	5000	\\ \hline \tiny
BROWNBS   	&\tiny	2	&\tiny	EG2       	&\tiny	1000	&\tiny	LOGHAIRY  	&\tiny	2	&\tiny	SCURLY10  	&\tiny	10000	\\ \hline \tiny
BROWNDEN  	&\tiny	4	&\tiny	EIGENALS  	&\tiny	2550	&\tiny	MANCINO   	&\tiny	100	&\tiny	SCURLY20  	&\tiny	10000	\\ \hline \tiny
BROYDN3DLS	&\tiny	10	&\tiny	EIGENBLS  	&\tiny	2550	&\tiny	MARATOSB  	&\tiny	2	&\tiny	SCURLY30  	&\tiny	10000	\\ \hline \tiny
BROYDN7D  	&\tiny	5000	&\tiny	EIGENCLS  	&\tiny	2652	&\tiny	MEXHAT    	&\tiny	2	&\tiny	SENSORS   	&\tiny	100	\\ \hline \tiny
BROYDNBDLS	&\tiny	10	&\tiny	ENGVAL1   	&\tiny	5000	&\tiny	MEYER3    	&\tiny	3	&\tiny	SINEVAL   	&\tiny	2	\\ \hline \tiny
BRYBND    	&\tiny	5000	&\tiny	ENGVAL2   	&\tiny	3	&\tiny	MGH09LS   	&\tiny	4	&\tiny	SINQUAD   	&\tiny	5000	\\ \hline \tiny
CHAINWOO  	&\tiny	4000	&\tiny	ENSOLS    	&\tiny	9	&\tiny	MGH10LS   	&\tiny	3	&\tiny	SISSER    	&\tiny	2	\\ \hline \tiny
CHNROSNB  	&\tiny	50	&\tiny	ERRINROS  	&\tiny	50	&\tiny	MGH17LS   	&\tiny	5	&\tiny	SNAIL     	&\tiny	2	\\ \hline \tiny
CHNRSNBM  	&\tiny	50	&\tiny	ERRINRSM  	&\tiny	50	&\tiny	MISRA1ALS 	&\tiny	2	&\tiny	SPARSINE  	&\tiny	5000	\\ \hline \tiny
CHWIRUT1LS	&\tiny	3	&\tiny	EXPFIT    	&\tiny	2	&\tiny	MISRA1BLS 	&\tiny	2	&\tiny	SPARSQUR  	&\tiny	10000	\\ \hline \tiny
CHWIRUT2LS	&\tiny	3	&\tiny	EXTROSNB  	&\tiny	1000	&\tiny	MISRA1CLS 	&\tiny	2	&\tiny	SPMSRTLS  	&\tiny	4999	\\ \hline \tiny
CLIFF     	&\tiny	2	&\tiny	FLETBV3M  	&\tiny	5000	&\tiny	MISRA1DLS 	&\tiny	2	&\tiny	SROSENBR  	&\tiny	5000	\\ \hline \tiny
COSINE    	&\tiny	10000	&\tiny	FLETCBV2  	&\tiny	5000	&\tiny	MODBEALE  	&\tiny	20000	&\tiny	SSBRYBND  	&\tiny	5000	\\ \hline \tiny
CRAGGLVY  	&\tiny	5000	&\tiny	FLETCBV3  	&\tiny	5000	&\tiny	MOREBV    	&\tiny	5000	&\tiny	SSCOSINE  	&\tiny	5000	\\ \hline \tiny
CUBE      	&\tiny	2	&\tiny	FLETCHBV  	&\tiny	5000	&\tiny	MSQRTALS  	&\tiny	1024	&\tiny	STRATEC   	&\tiny	10	\\ \hline \tiny
CURLY10   	&\tiny	10000	&\tiny	FLETCHCR  	&\tiny	1000	&\tiny	MSQRTBLS  	&\tiny	1024	&\tiny	TESTQUAD  	&\tiny	5000	\\ \hline \tiny
CURLY20   	&\tiny	10000	&\tiny	FMINSRF2  	&\tiny	5625	&\tiny	NCB20     	&\tiny	5010	&\tiny	THURBERLS 	&\tiny	7	\\ \hline \tiny
CURLY30   	&\tiny	10000	&\tiny	FMINSURF  	&\tiny	5625	&\tiny	NCB20B    	&\tiny	5000	&\tiny	TOINTGOR  	&\tiny	50	\\ \hline \tiny
DANWOODLS 	&\tiny	2	&\tiny	FREUROTH  	&\tiny	5000	&\tiny	NELSONLS  	&\tiny	3	&\tiny	TOINTGSS  	&\tiny	5000	\\ \hline \tiny
DECONVU   	&\tiny	63	&\tiny	GAUSS1LS  	&\tiny	8	&\tiny	NONCVXU2  	&\tiny	5000	&\tiny	TOINTPSP  	&\tiny	50	\\ \hline \tiny
DENSCHNA  	&\tiny	2	&\tiny	GAUSS2LS  	&\tiny	8	&\tiny	NONCVXUN  	&\tiny	5000	&\tiny	TOINTQOR  	&\tiny	50	\\ \hline \tiny
DENSCHNB  	&\tiny	2	&\tiny	GENHUMPS  	&\tiny	5000	&\tiny	NONDIA    	&\tiny	5000	&\tiny	TQUARTIC  	&\tiny	5000	\\ \hline \tiny
DENSCHNC  	&\tiny	2	&\tiny	GENROSE   	&\tiny	500	&\tiny	NONDQUAR  	&\tiny	5000	&\tiny	TRIDIA    	&\tiny	5000	\\ \hline \tiny
DENSCHND  	&\tiny	3	&\tiny	GROWTHLS  	&\tiny	3	&\tiny	NONMSQRT  	&\tiny	4900	&\tiny	VARDIM    	&\tiny	200	\\ \hline \tiny
DENSCHNE  	&\tiny	3	&\tiny	GULF      	&\tiny	3	&\tiny	OSBORNEA  	&\tiny	5	&\tiny	VAREIGVL  	&\tiny	50	\\ \hline \tiny
DENSCHNF  	&\tiny	2	&\tiny	HAHN1LS   	&\tiny	7	&\tiny	OSBORNEB  	&\tiny	11	&\tiny	VESUVIALS 	&\tiny	8	\\ \hline \tiny
DIXMAANA  	&\tiny	3000	&\tiny	HAIRY     	&\tiny	2	&\tiny	OSCIGRAD  	&\tiny	100000	&\tiny	VESUVIOLS 	&\tiny	8	\\ \hline \tiny
DIXMAANB  	&\tiny	3000	&\tiny	HATFLDD   	&\tiny	3	&\tiny	OSCIPATH  	&\tiny	10	&\tiny	VESUVIOULS	&\tiny	8	\\ \hline \tiny
DIXMAANC  	&\tiny	3000	&\tiny	HATFLDE   	&\tiny	3	&\tiny	PALMER1C  	&\tiny	8	&\tiny	VIBRBEAM  	&\tiny	8	\\ \hline \tiny
DIXMAAND  	&\tiny	3000	&\tiny	HATFLDFL  	&\tiny	3	&\tiny	PALMER1D  	&\tiny	7	&\tiny	WATSON    	&\tiny	12	\\ \hline \tiny
DIXMAANE  	&\tiny	3000	&\tiny	HEART6LS  	&\tiny	6	&\tiny	PALMER2C  	&\tiny	8	&\tiny	WOODS     	&\tiny	4000	\\ \hline \tiny
DIXMAANF  	&\tiny	3000	&\tiny	HEART8LS  	&\tiny	8	&\tiny	PALMER3C  	&\tiny	8	&\tiny	YATP1LS   	&\tiny	2600	\\ \hline \tiny
DIXMAANG  	&\tiny	3000	&\tiny	HELIX     	&\tiny	3	&\tiny	PALMER4C  	&\tiny	8	&\tiny	YATP2LS   	&\tiny	2600	\\ \hline \tiny
DIXMAANH  	&\tiny	3000	&\tiny	HIELOW    	&\tiny	3	&\tiny	PALMER5C  	&\tiny	6	&\tiny	YFITU     	&\tiny	3	\\ \hline \tiny
DIXMAANI  	&\tiny	3000	&\tiny	HILBERTA  	&\tiny	2	&\tiny	PALMER6C  	&\tiny	8	&\tiny	ZANGWIL2  	&\tiny	2	\\ \hline  
	\end{tabular}
\end{table}
        
\end{document}